\numberwithin{equation}{section}
\newtheorem{theorem}{Theorem}[section]
\newtheorem{lemma}[theorem]{Lemma}
\newtheorem{proposition}[theorem]{Proposition}
\theoremstyle{definition}
\newtheorem{example}[theorem]{Example}
\newtheorem{remark}[theorem]{Remark}
\newtheorem{definition}[theorem]{Definition}
\newtheorem{problem}[theorem]{Problem}
\newcommand{\be}{\begin{equation}}
\newcommand{\ee}{\end{equation}}
\newcommand{\bes}{\begin{equation*}}
\newcommand{\ees}{\end{equation*}}
\newcommand{\cA}{\mathcal{A}}
\newcommand{\cB}{\mathcal{B}}
\newcommand{\cE}{\mathcal{E}}
\newcommand{\cF}{\mathcal{F}}
\newcommand{\cH}{\mathcal{H}}
\newcommand{\cM}{\mathcal{M}}
\newcommand{\bB}{\mathbb{B}}
\newcommand{\bC}{\mathbb{C}}
\newcommand{\bD}{\mathbb{D}}
\newcommand{\bF}{\mathbb{F}}
\newcommand{\bM}{\mathbb{M}}
\newcommand{\bN}{\mathbb{N}}
\newcommand{\bS}{\mathbb{S}}
\newcommand{\bT}{\mathbb{T}}
\newcommand{\ol}{\overline}
\newcommand{\alg}{\operatorname{alg}}
\newcommand{\fin}{\mathrm{fin}}
\newcommand{\fA}{{\mathfrak{A}}}
\newcommand{\fB}{{\mathfrak{B}}}
\newcommand{\fV}{{\mathfrak{V}}}
\begin{document}

\title{Tensor algebras of subproduct systems and noncommutative function theory}

\author{Michael Hartz}
\thanks{M.\ Hartz was partially supported by a GIF grant and by the Emmy Noether Program of the German Research Foundation (DFG Grant 466012782).}
\address{M.H., Fachrichtung Mathematik, Universit\"at des Saarlandes, 66123 Saarbr\"ucken, Germany}
\email{hartz@math.uni-sb.de}

 \author{Orr Moshe Shalit}
 \address{O.S., Faculty of Mathematics\\
 Technion - Israel Institute of Technology\\
 Haifa\; 3200003\\
 Israel}
 \email{oshalit@technion.ac.il}
 \urladdr{\href{https://oshalit.net.technion.ac.il/}{\url{https://oshalit.net.technion.ac.il/}}}

 \thanks{O.M. Shalit is partially supported by ISF Grant no. 431/20.
 }
 \subjclass[2010]{47L80, 47A13, 46L52}
 \keywords{Subproduct systems, nonselfadjoint operator algebras}
%%%%%%%%%%%%%%%%%%%%%%%%%%%%%%%%%%%%%%%%%%%%%%%%

 \addcontentsline{toc}{section}{Abstract}

\begin{abstract}
We revisit tensor algebras of subproduct systems with Hilbert space fibers, resolving some open questions in the case of infinite dimensional fibers.
We characterize when a tensor algebra can be identified as the algebra of uniformly continuous noncommutative functions on a noncommutative homogeneous variety or, equivalently, when it is residually finite dimensional: this happens precisely when the closed homogeneous ideal associated to the subproduct system satisfies a Nullstellensatz with respect to the algebra of uniformly continuous noncommutative functions on the noncommutative closed unit ball. 
We show that --- in contrast to the finite dimensional case --- in the case of infinite dimensional fibers this Nullstellensatz may fail.
Finally, we also resolve the isomorphism problem for tensor algebras of subproduct systems: two such tensor algebras are (isometrically) isomorphic if and only if their subproduct systems are isomorphic in an appropriate sense. 
\end{abstract}

\maketitle

%%%%%%%%%%%%%%%%%%%%%%%%%%%%%%%%%%%%%%%%%%%%%%%%
%%%%%%%%%%%%%%%%%%%%%%%%%%%%%%%%%%%%%%%%%%%%%%%%
\section{Introduction and background}\label{sec:introduction}

%%%%%%%%%%%%%%%%%%
\subsection{Subproduct systems and their tensor algebras}

A subproduct system over a monoid $\bS$ is a family $X = \left(X_s \right)_{s \in \bS}$ where all the $X_s$ are Hilbert C*-correspondences over the same C*-algebra $A$, that behave nicely with respect to tensor products. 
The notion was first formally isolated and defined in \cite{ShaSol09} and \cite{BM10} for the purpose of studying semigroups of completely positive maps, after decades in which subproduct system-like structures appeared implicitly in noncommutative dynamics (see \cite{ShaSke23} for the general definition and for some context). 
Besides their important role in dilation theory of CP-semigroups \cite{ShaSol09,ShaSke23,Ver16}, it was clear right from the outset and has been continuously recognized that subproduct systems provide a unified and flexible framework for treating a very wide variety of previously studied operator theoretic and operator algebraic structures; see, e.g., \cite{AK21,CDOHLZ19,DRS11,DorMar14,DorMar15,GerSke20,Gur12,HabNes21,KakSha19,SSS18,SSS20,STV16,Vis11,Vis12}.

In this paper, we will concentrate on the simplest kind of subproduct system, where the monoid $\bS$ is $\bN = \{0,1,2,\ldots\}$ and the C*-algebra $A$ is just the scalar field $\bC$. 

%%%%%%%%%%%%
\begin{definition}\label{def:subps}
A {\em subproduct system} is a family $X = \left(X_n\right)_{n=0}^\infty$ of Hilbert spaces such that $X_0 = \bC$ and 
\be\label{eq:subps}
X_{m+n} \subseteq X_m \otimes X_n . 
\ee
\end{definition}
Even this seemingly restricted setting provides the basis for the formation of a plethora of operator algebras, ranging from the continuous functions on the unit sphere \cite{Arv98}, to algebras of analytic functions on homogeneous subvarieties of the unit ball \cite{DRS11}, to the Cuntz or Cuntz-Krieger C*-algebras \cite{KakSha19}, and so on.
Our interest is the simplest kind of operator algebra that one can attach to a subproduct system --- the tensor algebra.
 
If $X$ is a subproduct system, we find from the defining relation \eqref{eq:subps} of subproduct systems that $X_n \subseteq X_1^{\otimes n}$ for all $n$. 
We let $p_n \colon X_1^{\otimes n} \to X_n$ denote the corresponding orthogonal projection. 
If we need to specify the subproduct system $X$, then we shall use $p_n^X$ to denote this projection.

%%%%%%%%%%%%
\begin{definition}\label{def:tensor}
Given a subproduct system $X$, the {\em $X$-Fock space} is defined to be the Hilbert space direct sum
\[
\cF_X = \bigoplus_{n=0}^\infty X_n .
\]
Every $x \in X_1$ defines a (left) {\em shift operator} $S(x)$ that is determined from its action on the summands: 
\[
S(x) \colon y \in X_n  \mapsto p_{n+1} (x \otimes y) \in X_{n+1}. 
\]
Finally, the {\em tensor algebra} of $X$ is defined to be the unital operator algebra generated by the shifts
\be\label{eq:tensoralg}
\cA_X = \ol{\alg \{ S(x) \big| x \in X_1\}} \subset B(\cF_X) . 
\ee
\end{definition}
We can extend the definition of $S$ to every $x \in X_m$ by 
\[
S(x) \colon y \in X_n  \mapsto p_{m+n} (x \otimes y) \in X_{m+n}. 
\]

The $X$-Fock space has a natural grading and this induces a grading on $\cA_X$: an element $T \in \cA_X$ is said to be {\em $n$-homogeneous} if $T (X_m) \subseteq X_{m+n}$ for all $m \in \bN$.
We let $\cA_X^{(n)}$ denote the $n$-homogeneous elements of $\cA_X$. 
The graded structure of $\cA_X$ plays a key role in the analysis of tensor algebras.
We shall require the following result.

%%%%%%%%%%%%
\begin{proposition}[Proposition 9.3, \cite{ShaSol09}; Proposition 6.2 \cite{DorMar14}]\label{prop:grading}
For every $n \in \bN$, there is a completely contractive surjective idempotent $\Phi_n : \cA_X \to \cA_X^{(n)}$. 
Every $T \in \mathcal{A}_X$ can be written uniquely as a Ces\`aro norm-convergent sum $T = \sum_{n=0}^\infty T_n$ where $T_n = \Phi_n(T) \in \cA_X^{(n)}$ is $n$-homogeneous. 
Moreover, for all n, the map $S \colon X_n \to \cA_X$ given by $x \mapsto S(x)$ is an isometric surjection of $X_n$ onto $\cA_X^{(n)}$. 
\end{proposition}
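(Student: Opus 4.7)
The plan is to construct the projections $\Phi_n$ via Fourier analysis with respect to the natural \emph{gauge action} of the circle group on $\cA_X$, and to deduce Ces\`aro convergence from Fej\'er's theorem. For each $\lambda \in \bT$ let $U_\lambda \in B(\cF_X)$ be the unitary defined by $U_\lambda|_{X_n} = \lambda^n I_{X_n}$. A direct calculation gives $U_\lambda S(x) U_\lambda^* = \lambda^m S(x)$ for $x \in X_m$, so conjugation by $U_\lambda$ preserves $\cA_X$ and $\alpha_\lambda(T) := U_\lambda T U_\lambda^*$ defines a group of completely isometric automorphisms of $\cA_X$. A standard $3\varepsilon$-argument --- combining explicit continuity of $\alpha$ on the dense subalgebra of polynomials in the shifts with the uniform contractivity of the $\alpha_\lambda$ --- shows that $\lambda \mapsto \alpha_\lambda(T)$ is norm-continuous on $\bT$ for every $T \in \cA_X$.

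Next I would define
\[
\Phi_n(T) \;=\; \int_{\bT} \lambda^{-n}\, \alpha_\lambda(T)\, dm(\lambda),
\]
the Haar integral against normalized arc length, which exists as a norm limit of Riemann sums inside $\cA_X$. As an average of completely isometric maps weighted by a unimodular function, $\Phi_n$ is completely contractive. The change-of-variables identity $\alpha_\mu \circ \Phi_n = \mu^n \Phi_n$ shows $\Phi_n(T) \in \cA_X^{(n)}$, and on $\cA_X^{(n)}$ the map $\Phi_n$ is the identity since $\alpha_\lambda$ acts there as $\lambda^n$. Hence $\Phi_n\colon \cA_X \to \cA_X^{(n)}$ is a completely contractive surjective idempotent. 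The Ces\`aro statement then follows from the identity
\[
\sigma_N(T) \;:=\; \sum_{n=0}^{N}\Big(1-\tfrac{n}{N+1}\Big)\Phi_n(T) \;=\; \int_{\bT} K_N(\lambda)\,\alpha_\lambda(T)\,dm(\lambda),
\]
where $K_N$ is the Fej\'er kernel: since $(K_N)$ is a nonnegative approximate identity in $L^1(\bT)$ and $\lambda \mapsto \alpha_\lambda(T)$ is norm-continuous, $\sigma_N(T) \to T$ in norm, and uniqueness of the decomposition follows by applying $\Phi_m$ to both sides.

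For the last assertion, let $\Omega \in X_0 = \bC$ denote the unit vector and consider the evaluation $E(T) = T\Omega$. For $x \in X_n$ one has $S(x)\Omega = p_n(x \otimes \Omega) = x$ and $\|S(x)\xi\| = \|p_{n+m}(x \otimes \xi)\| \le \|x\|\|\xi\|$ for $\xi \in X_m$; combined with $\|S(x)\| \ge \|S(x)\Omega\| = \|x\|$ this yields $\|S(x)\| = \|x\|$, so $S\colon X_n \to \cA_X^{(n)}$ is isometric. For surjectivity I would establish $T = S(T\Omega)$ for every $T \in \cA_X^{(n)}$: this holds on finite linear combinations of $n$-fold products $S(x_1)\cdots S(x_n)$, which equal $S(p_n(x_1 \otimes \cdots \otimes x_n))$ by the subproduct structure, and extends to the closure $\cA_X^{(n)}$ by the isometry just established together with density of such polynomial elements (itself a consequence of continuity of $\Phi_n$). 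The step I expect to require the most care is the norm continuity of the gauge action on $\cA_X$ and the propagation of the formula $T = S(T\Omega)$ from polynomial shifts to the full closure; both reduce to density arguments anchored on the defining generation of $\cA_X$ by the shifts.
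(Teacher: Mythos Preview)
Your argument is correct and is precisely the standard one: the paper does not supply its own proof but cites \cite{ShaSol09} and \cite{DorMar14}, where the projections $\Phi_n$ are constructed exactly as you do, via the Fourier coefficients of the gauge action $\alpha_\lambda = \mathrm{Ad}\,U_\lambda$, with Ces\`aro convergence coming from the Fej\'er kernel; the paper later records the same formula (in nc-function language) as $\phi_n(f)(X) = \frac{1}{2\pi}\int_0^{2\pi} f(e^{i\theta}X)e^{-in\theta}\,d\theta$. One small point worth making explicit: you pass from $\alpha_\mu(\Phi_n(T)) = \mu^n \Phi_n(T)$ to $\Phi_n(T) \in \cA_X^{(n)}$, which uses the (easy but unstated) identification of $\cA_X^{(n)}$ as defined in the paper --- operators mapping $X_m$ into $X_{m+n}$ --- with the $\mu^n$-eigenspace of the gauge action; this is immediate from the definition of $U_\lambda$, but it is the link that ties your spectral description back to the paper's grading.
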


By Proposition \ref{prop:grading}, $S$ is an isometric map from $X_m$ into $\cA_X$, and it follows that we can describe the tensor algebra also as
\[
\cA_X = \ol{\alg \{ S(x) \big| n \in \bN, x \in X_n\}}. 
\]

%%%%%%%%%%%%
\begin{example}\label{ex:product}
Let $H$ be a Hilbert space, and consider the case in which $X_n  = H^{\otimes n}$ for all $n$. 
In this case the $X$-Fock space $\cF_X$ is simply the free Fock space $\cF(H) = \oplus_n H^{\otimes n}$ and the tensor algebra $\cA_X$ is the norm closed algebra generated by the row isometry $L=(L_i:=S(e_i))_{i \in \Lambda}$ for some orthonormal basis $\{e_i\}_{i\in \lambda}$, which is Popescu's noncommutative disc algebra $\fA_d$, for $d = |\Lambda| = \dim H$ \cite{Pop96}. 
In particular, when $H = \bC$, then $X_n  =\bC$ for all $n$.
In this case $\cF_X$ is easily identifiable with $\ell^2(\bN)$, and for $x = 1_\bC$ the shift $S(x)$ is then just the unilateral shift. 
The tensor algebra $\cA_X$ in this case is simply the disc algebra $A(\bD)$. 
\end{example}

%%%%%%%%%%%%
\begin{example}\label{ex:sym}
Let $H$ be a Hilbert space, and consider the case in which $X_n  =$ the symmetric product of $H$ with itself $n$-times.
In this case the $X$-Fock space $\cF_X$ is the symmetric Fock space over $H$, which is also known as the Drury-Arveson space $H^2$ and has a natural interpretation as a reproducing kernel Hilbert space \cite{Arv98}. 
The tensor algebra $\cA_X$ is the norm closed algebra generated by the polynomials inside the multiplier algebra of $H^2$.
\end{example}

%%%%%%%%%%%%%%%%%%
\subsection{The goals of this paper, briefly}
The purpose of this paper is to collect what is known about the representations and the classification of tensor algebras of subproduct systems of Hilbert spaces over $\bN$ and to complete the missing pieces. 
In particular, we sought to understand \emph{(a) when are two tensor algebras (isometrically) isomorphic}, and \emph{(b) can all tensor algebras be faithfully represented as an algebra of continuous noncommutative (nc) functions on a homogeneous nc variety}.
It is worth noting that although all the aspects that we treat have already been fully solved in the case of finite dimensional fibers (i.e., when $\dim X_1 < \infty$), in the case of infinite dimensional fibers some aspects have been left open, due to nontrivial technical complications. 
Indeed, results in several complex variables or affine  algebraic geometry, which played a role in the solution of the problems in the finite dimensional case, are not readily applicable in the infinite dimensional case. 
But not only do the proofs require modification as we pass to infinite dimensions, some of the results are different, too.

We shall show that the subproduct systems constitute a complete invariant for the tensor algebras, both in the category of tensor algebras with isometric isomorphism, as well as in the category of tensor algebras with completely bounded isomorphisms. 
With every subproduct system $X$ we associate a closed homogeneous ideal $I_X$ in $\fA_d$ for which $\cA_X \cong \fA_d / I_X$. 
We examine a Nullstellensatz (in free variables) for such ideals, and we show that the ideal $I_X$ satisfies this Nullstellensatz precisely when $\cA_X$ can be identified with the algebra of uniformly continuous nc functions on the nc variety consisting of all matrix row contractions that are annihilated by $I_X$. 
We provide an example showing that unlike in the case of finitely many variables, in the case of infinitely many variables this Nullstellensatz may fail, thus the tensor algebras are {\it not} in one-to-one correspondence with algebras of uniformly continuous nc analytic functions on nc homogeneous varieties, they need not be residually finite dimensional, and form a richer class of operator algebras than we may have thought.

%%%%%%%%%%%%%%%%%%
\subsection{The isomorphism problem for tensor algebras}
A natural question that arises is: to what extent does a subproduct system determine its tensor algebra? 
To make this question precise we will need a few more definitions, but a few words are in order before the definitions.
First, \cite[Example 5.1]{Gur12} exhibits two non-isomorphic subproduct systems of finite dimensional Hilbert spaces over $\bN \times \bN$ that give rise to the same tensor algebra.  
Thus, it makes sense to work over $\bN$ rather than more general semigroups, since over $\bN$ there is still hope that subproduct systems are a complete invariant. 
On the other hand, if we were to work with subproduct systems over $\bN$ whose fibers are general Hilbert C*-correspondences rather than Hilbert spaces, then the question of whether a tensor algebra is uniquely determined by its subproduct system  is a long standing open problem since \cite{MuhSol00} even in the case of a {\em product system}, that is, the case where we have equality in \eqref{def:subps}. 
In this case the tensor algebra is Muhly and Solel's {\em tensor algebra of a C*-correspondence} \cite{MuhSol98} --- classification of these tensor algebras contains the longstanding isomorphism problem for semicrossed products of C*-dynamical systems {\em as a special case}; a solution for the {\em isometric} isomorphism problem in this context has appeared only quite recently \cite{KatRam+} (see also \cite{DavKak14}). 
Finally, as we shall recall below, the problem is completely solved in the case of finite dimensional Hilbert space fibers, thus the case of general Hilbert spaces seems like the most urgent open case in the broad isomorphism problem of tensor algebras.

%\textcolor{blue}{
%TODO: SEPARABLE?
%One might also wonder what happens when the fibers of a subproduct system are nonseparable Hilbert spaces. 
%After some hesitation, we decided to concentrate our efforts on the separable case. 
%The reason is that the nonseparable case has hardly been considered in the literature, and the departure from a decades-old tradition would have come at the cost of developing new notation and applying results to situations beyond their scope. }
%
%\textcolor{blue}{Now that we have done our best to justify our measure of focus, we can proceed with the definitions.}

%%%%%%%%%%%%
\begin{definition}\label{def:subps_similar}
Let $X = \left(X_n\right)_{n=0}^\infty$ and $Y = \left(Y_n\right)_{n=0}^\infty$ be two subproduct systems. 
We say that $X$ and $Y$ are {\em similar} if there exists a sequence $V = (V_n)_{n=0}^\infty$ of invertible linear maps $V_n \colon X_n \to Y_n$ such that $\sup_n \left\|V_n \right\| < \infty$, $\sup_n \left\|V_n^{-1}\right\| < \infty$ and
\be\label{eq:similarity}
\quad V_{m+n} (p^X_{m+n}(x \otimes y)) = p^Y_{m+n}(V_m x \otimes V_n y)
\ee
for all $m,n$ and all $x \in X_m, y \in X_n$.
$V = (V_n)_{n=0}^\infty$ is then said to be a \emph{similarity}.
In case that $V_n$ is a unitary for all $n$, then $V$ is said to be an {\em isomorphism}. 
Two subproduct systems are said to be {\em isomorphic} if there is an isomorphism between them.
\end{definition}

%\textcolor{blue}{REMOVE REMARK?
%%%%%%%%%%%%
%\begin{remark}
%One might argue that it would have been better to call a similarity an isomorphism, and to use {\em isometric isomorphism} for what we defined as isomorphism. 
%However, we would not like to clash with existing terminology and, anyway, what here we refer to as ``isomorphism" seems to be the natural and more important notion. 
%\end{remark}}

If we have a similarity $V \colon X \to Y$ between two subproduct systems, then one may construct a bounded invertible map
\[
W := \bigoplus_{n=0}^\infty V_n \colon \cF_X =
 \bigoplus_{n=0}^\infty X_n \to \cF_Y = \bigoplus_{n=0}^\infty Y_n , 
\]
and it is then not hard to check that this gives rise to a completely bounded isomorphism $\varphi : \cA_X \to \cA_Y$ given by 
\[
\varphi(T) = W T W^{-1}.
\]
Of course, if $V$ is an isomorphism and not merely a similarity, then $W$ is a unitary and hence $\varphi$ is a completely isometric isomorphism. 
One of the main open problems that we shall address is whether the converse holds: 

\begin{problem}\label{prob:iso}
Suppose that $X$ and $Y$ are subproduct systems and that their tensor algebras $\cA_X$ and $\cA_Y$ are isomorphic in a certain sense: algebraically isomorphic, (completely) boundedly isomorphic or (completely) isometrically isomorphic. 
Does it follow that $X$ and $Y$ are similar/isomorphic?
\end{problem}

We note that with a tensor algebra one can also associate its maximal ideal space, or its space of finite dimensional representations, which form a variety or a noncommutative (nc) variety, respectively, which can serve as an invariant. 
This was the approach taken in \cite{DRS11,KakSha19,SSS18,SSS20}. 
However here we return to the most basic classification problem, Problem \ref{prob:iso}, as considered in \cite{ShaSol09,DorMar14}, in order to classify tensor algebras in terms of their subproduct systems. 
In fact, we shall show that the nc varieties in general {\em do not} classify tensor algebras.

%%%%%%%%%%%%
\subsection{The main results}

Our main result on classification of tensor algebras is the following.

%%%%%%%%%%%%
\begin{theorem}\label{thm:TFAE}
Let $X$ and $Y$ be subproduct systems. 
The following are equivalent: 
\begin{enumerate}
\item There exists a bounded isomorphism $\varphi \colon \cA_X \to \cA_Y$. 
\item There exists a completely bounded isomorphism $\varphi \colon \cA_X \to \cA_Y$. 
\item There exists a similarity $W \colon X \to Y$. 
\end{enumerate}
Also, the following are equivalent: 
\begin{enumerate}
\item There exists an isometric isomorphism $\varphi \colon \cA_X \to \cA_Y$. 
\item There exists a completely isometric isomorphism $\varphi \colon \cA_X \to \cA_Y$. 
\item There exists an isomorphism $W \colon X \to Y$. 
\end{enumerate}
\end{theorem}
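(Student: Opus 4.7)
The implications $(3) \Rightarrow (2) \Rightarrow (1)$ in both equivalences are explained in the discussion preceding the theorem: a similarity $V = (V_n)_{n \geq 0}$ assembles into a bounded invertible operator $W = \bigoplus_n V_n \colon \cF_X \to \cF_Y$, and conjugation $\varphi(T) = W T W^{-1}$ is a completely bounded algebra isomorphism $\cA_X \to \cA_Y$ which is completely isometric precisely when each $V_n$ is unitary. So the content lies in proving $(1) \Rightarrow (3)$, which I plan to treat in both equivalences simultaneously.

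Both tensor algebras carry strongly continuous gauge actions of $\bT$, implemented on Fock space by the diagonal unitaries $U^X_\lambda = \bigoplus_n \lambda^n I_{X_n}$ and analogously $U^Y_\lambda$; the associated Fourier projections coincide with the completely contractive idempotents $\Phi_n$ of Proposition \ref{prop:grading}, and the spectral subspaces are precisely the graded pieces $\cA_X^{(n)} = S^X(X_n)$ and $\cA_Y^{(n)} = S^Y(Y_n)$. Given a bounded isomorphism $\varphi \colon \cA_X \to \cA_Y$, the natural candidate similarity is the \emph{graded symbol}
\[
V_n \;:=\; (S^Y)^{-1} \circ \Phi_n^Y \circ \varphi \circ S^X \colon X_n \longrightarrow Y_n.
\]
Since $S^X, S^Y$ are isometric and $\Phi_n^Y$ is completely contractive, $\sup_n \|V_n\| \leq \|\varphi\|$, and applying the same construction to $\varphi^{-1}$ produces maps $W_n \colon Y_n \to X_n$ with $\sup_n \|W_n\| \leq \|\varphi^{-1}\|$.

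The crux of the argument is verifying the similarity relation \eqref{eq:similarity}. Multiplicativity of $\varphi$ together with $S^X(x) S^X(y) = S^X(p^X_{m+n}(x \otimes y))$ for $x \in X_m, y \in X_n$ gives
\[
S^Y\big(V_{m+n}(p^X_{m+n}(x \otimes y))\big) = \Phi_{m+n}^Y\big(\varphi(S^X(x))\, \varphi(S^X(y))\big),
\]
and expansion by grading yields $\sum_{k+\ell = m+n} \Phi_k^Y(\varphi(S^X(x)))\, \Phi_\ell^Y(\varphi(S^X(y)))$. The target identity therefore reduces to a \emph{grading rigidity} statement: $\Phi_k^Y \circ \varphi \circ S^X = 0$ on $X_n$ whenever $k \neq n$, equivalently $\varphi(\cA_X^{(n)}) \subseteq \cA_Y^{(n)}$ for every $n$. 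This is where the infinite-dimensional case diverges from the finite-dimensional one: in \cite{DRS11,DorMar14} one exploits the induced biholomorphism between the classical character varieties and invokes Cartan-type rigidity (after normalizing the biholomorphism to fix $0$ by composing with an automorphism of the unit ball) --- tools that become unavailable or much weaker in infinitely many variables. I expect this to be the main obstacle, and the plan is to replace the holomorphic-rigidity input by a Fourier-analytic argument using the two-parameter family $(\lambda, \mu) \mapsto \gamma^Y_\mu \circ \varphi \circ \gamma^X_\lambda$: combining the product rule $\varphi^{(m+m',\, k)}(TT') = \sum_{k_1+k_2 = k} \varphi^{(m,\, k_1)}(T)\, \varphi^{(m',\, k_2)}(T')$ for the Fourier components of $\varphi$ with the uniform boundedness of the $V_n$ and $W_n$, one iteratively forces the off-diagonal components to vanish. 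Once the rigidity is in place, the similarity relation is immediate, the $V_n$ and $W_n$ are mutually inverse, and in the isometric case the bounds $\|V_n\|, \|W_n\| \leq 1$ force each $V_n$ to be unitary, giving an isomorphism of subproduct systems.
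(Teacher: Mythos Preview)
There is a genuine gap: the ``grading rigidity'' you are aiming for --- namely $\Phi_k^Y \circ \varphi \big|_{\cA_X^{(n)}} = 0$ for $k \neq n$, i.e.\ that $\varphi$ itself is graded --- is simply false for arbitrary bounded isomorphisms. Already for $X = Y$ the full product system with $\dim X_1 = 1$, so that $\cA_X = A(\bD)$ is the disc algebra, composition with any M\"obius automorphism of $\bD$ not fixing the origin is a completely isometric automorphism that is not graded: $\Phi_0(\varphi(z))$ is a nonzero scalar. No amount of Fourier analysis on the family $\gamma^Y_\mu \circ \varphi \circ \gamma^X_\lambda$ will force the off-diagonal components of such a $\varphi$ to vanish, because they do not. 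The product rule you write down is correct, but it does not by itself yield vanishing.

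The paper does not attempt to show that a given $\varphi$ is graded. Instead it produces a \emph{different} isomorphism that is vacuum preserving, and then invokes the Dor-On--Markiewicz reduction (Proposition~\ref{prop:TFAE}) --- which is essentially the construction you describe, but only valid once the vacuum is preserved. To obtain a vacuum-preserving isomorphism, the paper works on the scalar character space: $\varphi^*$ restricts to a biholomorphism between the homogeneous zero sets $Z(J_Y)$ and $Z(J_X)$ inside the unit ball of $X_1, Y_1$, and a Schwarz-lemma argument on one-dimensional slices (Lemmas~\ref{lem:Schwarz} and~\ref{lem:disc_trick}) shows that $\varphi^*$ maps a disc through $0$ onto a disc through $0$. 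The classical disc trick then furnishes $\lambda,\mu \in \bT$ so that $\psi = \varphi \circ \Gamma_\mu \circ \varphi^{-1} \circ \Gamma_\lambda \circ \varphi$ is vacuum preserving. This step --- replacing the ball automorphism normalization by a disc-to-disc argument requiring only the one-variable Schwarz lemma --- is precisely what survives the passage to infinitely many variables, and it is what your proposal is missing.
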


After several earlier partial versions, this theorem was obtained in \cite[Theorem 3.4]{KakSha19} in the case where $\dim X_1 < \infty$. 
It turns out that the key issue is to show that the existence of a bounded isomorphism implies the existence of a {\em vacuum preserving} isomorphism; this reduction was used in several works and received definite form in \cite{DorMar14}. 
We shall explain this reduction in Section \ref{subsec:reduction}, and settle the isomorphism problem in the remainder of Section \ref{sec:classification}.

In Section \ref{sec:representation} we determine all bounded finite dimensional representations of tensor algebras and, in particular, we describe their maximal ideal spaces.
Section \ref{sec:representation} also contains the second main result obtained in this work: a characterization of when a tensor algebra $\cA_X$ can be identified as an algebra of uniformly continuous nc holomorphic functions on a homogeneous nc holomorphic variety, via an analysis of the finite dimensional representations of $\cA_X$. 
With every subproduct system $X$ we identify a norm closed ideal $J = J_X \triangleleft \,\fA_d$ (where $d = \dim X_1$). 
With this ideal we identify an nc variety $V(J)$, which consists of all contractive $d$-tuples of $n \times n$ matrices ($n \in \bN$) that are annihilated by elements of $J$. 
This nc variety can be identified with the space of completely contractive, finite dimensional representations of $\cA_X$, and every element of $\cA_X$ determines an nc function on $V(J)$. 
We then show that $\cA_X$ is residually finite dimensional (i.e., it is completely normed by its finite dimensional representations), if and only if the restriction map from $\cA_X$ to $V(J)$ is injective, and that this happens if and only if the following Nullstellensatz holds:
\[
J = I(V(J)), 
\]
where for an nc set $\Omega$ we let $I(\Omega)$ denote the ideal of all elements in $\fA_d$ that vanish on $\Omega$. 
In the case $\dim X_1 < \infty$ it was shown that this Nullstellensatz holds for every closed and homogeneous ideal, and that tensor algebras can always be considered as nc functions on an nc variety (see Sections 9 and 10 in \cite{SSS18}). 
However, we show that when $\dim X_1 = \aleph_0$ the Nullstellensatz holds for certain ideals and fails for others. 
To state these results precisely we require some preliminaries on nc functions which are discussed in the following section.
 
 \vskip 10pt

\noindent{\bf Acknowledgements.} 
We wish to thank the anonymous referee, whose questions and suggestions made this a better paper.

%%%%%%%%%%%%
\section{Noncommutative functions and subproduct systems}\label{sec:functions}

%%%%%%%%%%%%
\subsection{Noncommutative functions and the noncommutative unit ball}
We now recall the rudiments of the theory of noncommutative (nc) functions that are relevant to our main problem. 
The basic general theory is developed in \cite{KVV14} and a very rapid introduction to the subject can be found in \cite{AM16}; see also \cite{SSS18} for an introduction that is geared toward the kind of non-selfadjoint operator algebras that we are studying.
Let $H$ be a Hilbert space of dimension $d$. 
We fix throughout an index set $\Lambda$ with $|\Lambda| = d$ and an orthonormal basis $\{e_i\}_{i \in \Lambda}$ for $H$. 
For us, the most interesting case is $d = \aleph_0$. 
We note that in the literature the case $d > \aleph_0$ is hardly considered, but at least some of the basics can be developed in this generality as well (in essence, each individual function in $d>\aleph_0$ variables will only involve at most $\aleph_0$ variables). 

%%%%%%%%%%%%%
\begin{remark}
Before going into the definitions, let us give a word of motivation addressed at the reader who is wary of studying analytic functions in infinitely many variables. 
Besides the intrinsic interest, one good reason to study algebras in infinitely many variables is the hope that they may give a concrete representation of a large class of operator algebras; this is one of our concerns in this paper. 
Another good reason to study analytic functions in infinitely many variables, is that even Hilbert function spaces in a single complex variable may have useful representations as spaces of functions in infinitely many variables. 
Agler and McCarthy proved that every complete Pick reproducing kernel Hilbert space can be identified with the restriction of the Drury-Arveson space in $d$ variables to a subvariety of the unit ball $\bB_d$ \cite{AM00}. 
In general, their theorem requires $d = \infty$. 
Indeed, $d = \infty$ occurs naturally in uncountably many typical examples of function spaces on the unit disc \cite{DHS15},
and $d = \infty$ is in fact necessary for many classical spaces on the unit disc, including the Dirichlet space; see \cite{Roc17} and \cite[Corollary 11.9]{Hartz17a}.
In the case of the Dirichlet space, this phenomenon persists even if we weaken the notion of identification \cite{Hartz22}.
\end{remark}

For every $n$, we let $M_n(\bC)^d := M_n(\bC)^\Lambda$ denote the space of all $d$-tuples $T = (T_i)_{i\in \Lambda}$ that define a bounded operator from the direct sum $\bigoplus_{i\in \Lambda} \bC^n$ to $\bC^n$. 
We let $\|T\| = \|\sum_i T_iT_i^*\|$ denote the norm of this operator, referred to also as the row norm of the tuple $T$. 
The set $\bM_d = \sqcup_n M_n(\bC)^d$ is our {\em noncommutative universe}, by which we mean the big set that contains all the ``things" that we will plug as arguments into nc functions.
Elements of $\bM_d$ can be considered either as $d$-tuples $T = (T_i)_{i \in \Lambda}$ or as (convergent) sums $\sum_i T_i \otimes e_i$ in the $n \times n$ matrices over the row operator space $H_r$.
On $\bM_d$ there are natural operations of direct sum $T\oplus U = (T_i \oplus U_i)_{i \in \Lambda}$ and left/right multiplication by matrices: $S \cdot (T_i)_{i \in \Lambda} = (ST_i)_{i \in \Lambda}$ and $T \cdot S = (T_i S)_{i \in \Lambda}$.

%%%%%%%%%%%%
\begin{definition}\label{def:nc_function}
Let $\Omega = \sqcup_n \Omega_n$ be an {\em nc set}, that is, a graded subset of $\bM_d$, where $\Omega_n \subseteq M_n(\bC)^d$ for all $n$, that is closed under direct sums.
A {\em nc holomorphic function} is a map $f : \Omega \to \bM_1$ that satisfies 
\begin{enumerate}
\item $f$ is graded: $f(\Omega_n) \subseteq M_n(\bC)$, 
\item $f$ respects direct sums: $f(X \oplus Y) = f(X) \oplus f(Y)$ for all $X,Y \in \Omega$,
\item $f$ respects similarity: $f(S^{-1}\cdot X \cdot S) = S^{-1} \cdot f(X) \cdot S$ whenever $X \in \Omega$ and $S^{-1} \cdot X \cdot S \in \Omega$.
\end{enumerate}
\end{definition}

For us, the most interesting nc set will be the {\em noncommutative (nc) open unit ball} $\fB_d$ defined by 
\[
\fB_d = \bigsqcup_{n=1}^\infty  \left\{(T_i)_{i \in \Lambda} \in M_n(\bC)^d : \left \|\sum_{i \in \Lambda} T_i T_i^* \right\| < 1\right\}. 
\]
In other words, $\fB_d$ is the set of all strict row contractions consisting of matrices. 

On $\fB_d$ we have the natural tuple of coordinate functions $z = (z_i)_{i\in \Lambda}$ defined by $z_j(T) = T_j$. 
The functions $z_i, i\in \Lambda$, together with the unit $1$ generate the algebra $\bF_d$ of free polynomials in $d$ noncommuting variables. 
When treating finite $d$, ideals in the algebra $\bF_d$ were enough to determine all subproduct systems $X$ with $\dim X_1 = d$. 
However, when $d = \infty$, the ideal structure of $\bF_d$ is not rich enough, and we will need to work with function algebras. 
The issue is, for example, that a function of the form $f(z) = \sum_{k=1}^\infty a_k z_k$ for a given sequence $(a_k) \in \ell^2$ is not a polynomial according to the above definition, but it is a homogeneous nc function of degree one that is contained in either one of the Banach algebras of nc functions that we consider below (some authors do indeed consider bounded linear functionals and their products as polynomials). 

The {\em nc Drury-Arveson space} $\cH_d^2$ is defined to be the space of all nc functions $f$ on $\fB_d$ that have a Taylor series $\sum_\alpha a_\alpha z^\alpha$ for which $\|f\|^2_{\cH^2_d} = \sum |a_\alpha|^2 < \infty$. 

The algebra of all bounded nc holomorphic functions on $\fB_d$ is denoted $H^\infty(\fB_d)$; this algebra is a Banach algebra with respect to the supremum norm $\|f\|_\infty := \sup\{\|f(X)\| : X \in \fB_d\}$. 
It can be shown that $H^\infty(\fB_d)$ is in fact equal to the algebra of left multipliers of $\cH^2_d$; see \cite{SSS18} for details.

%%%%%%%%%%%%
\begin{definition}\label{def:nc_cont_function}
If $\Omega \subseteq \fB_d$ is an nc set, we say that an nc function $f : \Omega \to \bM_1$ is {\em uniformly continuous} if for all $\epsilon > 0$, there is a $\delta > 0$, such that for all $n$ and all $X,Y \in \Omega_n$, we have that $\|X - Y\|<\delta$ implies $\|f(X) - f(Y)\|<\epsilon$. 
We let $A(\Omega)$ denote the algebra of all uniformly continuous nc functions on $\Omega$. 
In particular, $A(\fB_d)$ denotes the algebra of all uniformly continuous nc functions on $\fB_d$. 
We shall denote $A_d = A(\fB_d)$. 
\end{definition}
Every element in $A_d$ extends uniquely to a uniformly continuous nc function on $\ol{\fB}_d$ (defined as the levelwise closure of $\fB_d$, which is equal to the set of all row contractions acting on finite dimensional spaces). 

In \cite[Section 9]{SSS18}, it was shown that $A_d$ is equal to the norm closure of the polynomials with respect to the sup norm in $H^\infty(\fB_d)$ (although there it was implicitly assumed that $d\leq \aleph_0$, the argument works for any $d$). 
This norm closure of polynomials can be identified with the tensor algebra $\fA_d$ corresponding to the full product system (see Example \ref{ex:product}), by the natural map that sends the coordinate function $z_i$ to the operator $S(e_i)$; see \cite[Section 3]{SSS18}. 
We shall henceforth identify $\fA_d$ with $A_d$ and identify elements in $\fA_d$ as nc functions in $A_d$ and vice versa, as convenient. 
By Proposition \ref{prop:grading}, we can also identify $A_d$ as a linear and dense subspace of the full Fock space $\cF(H)$ for a $d$-dimensional Hilbert space $H$, via
\[
H^{\otimes n} \ni x \mapsto S(x) \in \fA_d \cong A_d
\]
and 
\be\label{eq:identification}
\fA_d \cong A_d \ni f \mapsto f((S(e_i))_{i\in \Lambda}) 1 \cong f\cdot 1 \in \cF(H). 
\ee

%%%%%%%%%%%%%
\subsection{Homogeneous ideals and varieties}

Every $f \in \cH^2_d$ can be written as a norm convergent sum $f = \sum f_n$ where $f_n$ is the norm limit of $n$-homogeneous polynomials. 
The function $f_n$ is said to be an {\em $n$-homogeneous function}. We shall let $\cH^2_d(n)$ denote the space of all $n$-homogeneous functions in $\cH^2_d$. 
Every function $f \in A_d$ has a Ces\`aro convergent decomposition 
\[
f = \sum f_n 
\]
into {\em homogeneous components}; see \cite[Lemma 7.9]{SSS18}. 
The $n$-homogeneous component $f_n$ can be obtained from $f$ by the completely contractive projection
\[
f_n(X) = \phi_n(f)(X) = \frac{1}{2\pi} \int_0^{2\pi} f(e^{i\theta} X)e^{-in\theta} d\theta  \,\, , \,\, X \in \fB_d.
\]
Note that under the identification $A_d \cong \fA_d$ the projection $\phi_n$ corresponds to $\Phi_n$ from Proposition \ref{prop:grading}.

%%%%%%%%%%%%
\begin{definition}\label{def:hom_ideal}
An ideal $J \subseteq A_d$ is said to be {\em homogeneous}, if whenever $f \in J$, then for all $n$, the homogeneous component $f_n$ is also in $J$. 
A subset $\fV \subseteq \fB_d$ is said to be a {\em homogeneous variety} if it is the joint zero locus (in $\fB_d$) of all functions in a homogeneous ideal $J$: 
\[
\fV = V(J) := \{X \in \fB_d : f(X) = 0 \,\, \textrm{ for all } \,\, f \in J\}.
\]
\end{definition}

For any subset $\Omega \subseteq \fB_d$, we denote 
\[
I(\Omega) := \{f \in A_d : f(X) = 0 \,\, \textrm{ for all } \,\, X \in \Omega \}. 
\]
One can show that if $\fV \subseteq \fB_d$ is a homogeneous variety, then it is a {\em homogeneous set} in the sense that $\bC X \cap \fB_d \subseteq \fV$ for all $X \in \fV$, and from this it follows that $I(\fV)$ is a homogeneous ideal.

By \cite[Section 9]{SSS18} for every homogeneous nc variety $\fV \subseteq \fB_d$, the algebra $A(\fV)$ is the closure in the supremum norm of the polynomials, i.e., the closure of the algebra generated by the restriction of coordinate functions to $\fV$.
Note that every nc function in $A(\fV)$ extends to an nc function on $\ol{\fV}$ and we may identify $A(\fV)$ with $A(\ol{\fV})$. 

The operators $I(\cdot)$ and $V(\cdot)$ are inclusion reversing maps between closed ideals and nc varieties and vice versa, but they are not mutual inverses. 
We have the following tautological fact that is well known in many analogous circumstances.
%%%%%%%%%%%%
\begin{lemma}\label{lem:triple_tautology}
For every $\Omega \subseteq \fB_d$ and every $E \subseteq A_d$, we have 
\[
I(\Omega) = I(V(I(\Omega))
\]
and
\[
V(I(V(E))) = V(E).
\]
\end{lemma}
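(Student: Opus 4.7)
The plan is to exploit the standard Galois connection pattern between $I$ and $V$. First I would record the two elementary inclusions that any such pair of maps satisfies: $\Omega \subseteq V(I(\Omega))$ for every $\Omega \subseteq \fB_d$, and $E \subseteq I(V(E))$ for every $E \subseteq A_d$. The first is immediate because any $X \in \Omega$ is annihilated by every element of $I(\Omega)$ by the very definition of $I(\Omega)$, hence lies in $V(I(\Omega))$. The second is the dual statement: any $f \in E$ vanishes on $V(E)$ by the definition of $V(E)$, hence belongs to $I(V(E))$.

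Next I would note that both $I(\cdot)$ and $V(\cdot)$ are inclusion-reversing. This is also immediate from the definitions: if $\Omega_1 \subseteq \Omega_2$ then a function vanishing on $\Omega_2$ a fortiori vanishes on $\Omega_1$, so $I(\Omega_2) \subseteq I(\Omega_1)$; and similarly for $V$.

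With these two ingredients in hand the identities are purely formal. For the first identity, apply $I$ to the inclusion $\Omega \subseteq V(I(\Omega))$ and use that $I$ is inclusion-reversing to obtain $I(V(I(\Omega))) \subseteq I(\Omega)$. For the reverse inclusion, apply the second elementary inclusion with $E := I(\Omega)$ to get $I(\Omega) \subseteq I(V(I(\Omega)))$. Combining these gives $I(\Omega) = I(V(I(\Omega)))$. For the second identity, apply $V$ to the inclusion $E \subseteq I(V(E))$ and use that $V$ is inclusion-reversing to obtain $V(I(V(E))) \subseteq V(E)$; in the other direction, apply the first elementary inclusion with $\Omega := V(E)$ to get $V(E) \subseteq V(I(V(E)))$. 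Combining these yields the second equality.

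I do not foresee a serious obstacle here: the statement is a purely formal consequence of the fact that $(I, V)$ is an antitone Galois connection between the posets of subsets of $\fB_d$ and subsets of $A_d$, and no analytic or algebraic property of $\fB_d$ or $A_d$ (such as closedness, homogeneity, or continuity) is needed. The only small care required is to be consistent about the ambient universes in which $I(\cdot)$ and $V(\cdot)$ land, so that applying $I$ after $V$ (and vice versa) makes sense --- but this is guaranteed by the definitions already recalled in the paper.
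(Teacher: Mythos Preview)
Your proposal is correct and follows essentially the same approach as the paper's proof: both exploit the antitone Galois connection structure, using the elementary inclusions $\Omega \subseteq V(I(\Omega))$ and $E \subseteq I(V(E))$ together with the inclusion-reversing property of $I$ and $V$. The paper's proof is merely terser, treating only one of the two equalities explicitly and leaving the other as symmetric.
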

\begin{proof}
This is a general fact true for any pair of such order reversing operators, so we explain just one equality. 
Clearly $I(\Omega) \subseteq I(V(I(\Omega)))$. 
But for the same reason $\Omega \subseteq V(I(\Omega))$, and so applying the inclusion reversing $I(\cdot)$ we obtain $I(\Omega) \supseteq I(V(I(\Omega)))$.
\end{proof}

%%%%%%%%%%%%
\begin{definition}\label{def:nullstz}
Let $J$ be a closed ideal in $\fA_d$. 
We say that $J$ {\em satisfies the Nullstellensatz} if 
\[
J = I(V(J)) = \{f \in \fA_d : f(X) = 0 \text{ for all } X \in V(J)\}.
\]
\end{definition}
By Lemma \ref{lem:triple_tautology}, every ideal that is the annihilating ideal of a prescribed zero set satisfies the Nullstellensatz. 
In particular, for every homogeneous $J \triangleleft A_d$, the ideal $I(V(J))$ is a closed homogeneous ideal in $A_d$ that satisfies the Nullstellensatz. 
On the other hand, we shall see in Section \ref{subsec:failure} that there are closed homogeneous ideals that do not satisfy the Nullstellensatz.

%%%%%%%%%%%%
\subsection{Subproduct systems and closed homgeneous ideals}\label{subsec:ideals}

Given $d < \infty$ and an orthonormal basis $\{e_1, e_2, \ldots, e_d\}$ for a $d$-dimensional Hilbert space $H$, there is a $1$-$1$ correspondence between homogeneous ideals $I \triangleleft \bF_d$ and subproduct systems $X = (X_n)_{n=0}^\infty$ with $X_1 \subseteq H$ given by 
\[
I \longleftrightarrow X^I
\]
where the $n$th fiber of $X^I$ is given by 
\[
X^I_n = X_1^{\otimes n} \ominus \{ p(e) : p \in I \textrm{ is $n$-homogeneous}\}
\]
and $p(e) = \sum_{|\alpha|=n} a_\alpha e_{\alpha_1} \otimes \cdots \otimes e_{\alpha_n}$ where $p(z) = \sum_{|\alpha|=n} a_\alpha z_{\alpha_1} \cdots z_{\alpha_d}$ (see \cite[Proposition 7.2]{ShaSol09}). 
When $d = \infty$, this correspondence  ceases to be surjective. 
However, if we replace homogeneous ideals in $\bF_d$ with closed homogeneous ideals in $A_d$ we recover a bijective correspondence, as we will show below.

Letting now $H$ be a Hilbert space of dimension $d$, and given a homogeneous closed ideal $J \subseteq A_d$, the identification of $A_d$ as a subspace of the full Fock $\cF(H)$ space given in \eqref{eq:identification} allows us to define $X^J = (X^J_n)_{n=0}^\infty$ by
\[
X^J_n = H^{\otimes n} \ominus \{f_n \in J : f_n \textrm{ is } n\textrm{-homogeneous} \}. 
\]
Conversely, given a subproduct system $X = (X_n)_{n=0}^\infty$ with $X_1 = H$, we can define $J_X \subseteq A_d$ with the help of the orthogonal complement
\[
J_X = \overline{\sum_{n=0}^\infty H^{\otimes n} \ominus X_n}^{\|\cdot\|_\infty}.
\]

%%%%%%%%%%%%
\begin{proposition}\label{prop:ideal_subps}
The correspondence $J \longmapsto X^J$ is a bijective correspondence between norm closed homogeneous ideals in $A_d$ and subproduct systems $X = (X_n)_{n=0}^\infty$ with $X_1 \subseteq H$. The inverse of $J \longmapsto X^J$ is given by $X \longmapsto J_X$. 
\end{proposition}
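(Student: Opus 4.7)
The plan is to verify, in order, four claims: (a) $J_X$ is a norm-closed homogeneous two-sided ideal in $A_d$ for every subproduct system $X$; (b) $X^J$ is a subproduct system for every closed homogeneous ideal $J$; (c) $X^{J_X} = X$; and (d) $J_{X^J} = J$. Throughout, I work inside $\fA_d$ using the identification \eqref{eq:identification}, under which each $H^{\otimes n}$ sits isometrically as $\fA_d^{(n)}$ by Proposition \ref{prop:grading}, and the product of homogeneous elements $\xi \in H^{\otimes m}$, $\eta \in H^{\otimes n}$ is simply $\xi \otimes \eta \in H^{\otimes(m+n)}$ (since the ambient subproduct system from Example \ref{ex:product} is the full one, so no projection intervenes).

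For (a), the key computation is that, given the subproduct relation $X_{m+n} \subseteq X_m \otimes X_n$, any vector $\xi \otimes \eta$ with $\xi \in H^{\otimes m}$ and $\eta \in H^{\otimes n} \ominus X_n$ lies in $H^{\otimes m} \otimes (H^{\otimes n} \ominus X_n)$, which by the Hilbert tensor decomposition
\[
H^{\otimes(m+n)} = (X_m \otimes X_n) \oplus (X_m \otimes X_n^\perp) \oplus (X_m^\perp \otimes X_n) \oplus (X_m^\perp \otimes X_n^\perp)
\]
is orthogonal to $X_m \otimes X_n \supseteq X_{m+n}$; hence $\xi \otimes \eta \in H^{\otimes(m+n)} \ominus X_{m+n}$. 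The same argument applied on the other side handles left multiplication. Therefore $\sum_n(H^{\otimes n} \ominus X_n)$ is a two-sided ideal inside the polynomial subalgebra of $\fA_d$, and by continuity of multiplication its sup-norm closure $J_X$ is a closed ideal in $A_d$. Homogeneity follows since $J_X$ is the sup-norm closure of a sum of homogeneous subspaces and each $\phi_n$ is continuous.

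For (b), the fiber $X^J_n \subseteq H^{\otimes n}$ is closed by construction. To verify $X^J_{m+n} \subseteq X^J_m \otimes X^J_n$, I check $X^J_{m+n}$ is orthogonal to both $(H^{\otimes m} \ominus X^J_m) \otimes H^{\otimes n}$ and $H^{\otimes m} \otimes (H^{\otimes n} \ominus X^J_n)$ inside $H^{\otimes(m+n)}$. This is where the ideal property of $J$ enters: if $\xi$ is $m$-homogeneous and lies in $J$ and $\eta$ is any $n$-homogeneous vector, then under the identification the product $\xi \otimes \eta$ is $(m+n)$-homogeneous and lies in $J$, hence is orthogonal to $X^J_{m+n}$ by definition; the symmetric statement is identical.

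For (c) and (d), Proposition \ref{prop:grading} and the Cesàro decomposition do the bookkeeping. In (c), the $n$-homogeneous component $\phi_n(J_X)$ equals the sup-norm closure of $H^{\otimes n} \ominus X_n$ by continuity of $\phi_n$; but since the sup norm on $\fA_d^{(n)}$ coincides with the Hilbert norm on $H^{\otimes n}$, this subspace is already closed, so $\phi_n(J_X) = H^{\otimes n} \ominus X_n$ and thus $X^{J_X}_n = X_n$. In (d), by construction $H^{\otimes n} \ominus X^J_n = \phi_n(J)$, so $J_{X^J}$ is the sup-norm closure of $\sum_n \phi_n(J)$; since each $f \in J$ is the Cesàro limit of its partial sums (which lie in $\sum_n \phi_n(J) \subseteq J$), we get $J_{X^J} = J$. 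The most delicate step is the orthogonality argument in (a), where the defining Hilbert-space inclusion of a subproduct system is converted into multiplicative closure; the remaining steps are routine once Proposition \ref{prop:grading} and the Cesàro decomposition are in hand.
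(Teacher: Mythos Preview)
Your proof is correct and follows essentially the same route as the paper's: the orthogonality computation $H^{\otimes m}\otimes X_n^\perp \subseteq X_{m+n}^\perp$ (and its mirror) is exactly what the paper uses for both directions, and the mutual-inverse verification via $\phi_n$ and the Ces\`aro decomposition is precisely the content of the paper's closing line ``Using Proposition~\ref{prop:grading}, these two operations are seen to be mutual inverses.'' One small terminological remark: what you call the ``polynomial subalgebra'' should really be the algebraic direct sum $\bigoplus_n H^{\otimes n}$ (finite sums of homogeneous elements), since for $d=\infty$ the space $H^{\otimes n}\ominus X_n$ need not lie in $\bF_d$; your computation already works at that level of generality, so this is only a naming issue.
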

\begin{proof}
Indeed if $J$ is a homogeneous ideal, then for all homogeneous functions $f \in J$ and $g \in A_d$ we have $f g \in J$ and $g f \in J$. 
Using closedness of $J$, this translates to 
\[
(X^J_m)^\perp \otimes H^{\otimes n} \cup H^{\otimes m} \otimes (X^J_n)^\perp \subseteq (X^J_{m+n})^\perp = J_{m+n},
\]
which implies
\[
X^J_{m+n} \subseteq X^J_m \otimes X^J_n. 
\]
Conversely, if $X$ is a subproduct system, then every $n$-homogeneous element $f \in J_X$ satisfies, for every $m$-homogeneous polynomial $g \in \bF_d$, 
\[
gf \cong g \otimes f \in H^{\otimes m} \otimes X_n^\perp \subseteq X_{m+n}^\perp \subset J_X
\]
(and likewise from the right) and an approximation argument gives that $J_X$ is an ideal. 

Using Proposition \ref{prop:grading}, these two operations are seen to be mutual inverses of each other.
\end{proof}

%%%%%%%%%%%%
\begin{example}\label{ex:symIdeal}
Let $X = \left(X_n\right)_{n=0}^\infty$ be the symmetric product system that was described in Example \ref{ex:sym}, that is, $X_n  =$ the symmetric product of $H$ with itself $n$-times where $H$ is some fixed Hilbert space. 
Then the corresponding ideal $J_X$ is the {\em commutator ideal} in $A_d$, which is equal to the closed ideal generated by the noncommutative polynomials $z_i z_j - z_j z_i$ for all $i,j=1, \ldots, d$ (details can be extracted from \cite[Proposition 2.4]{DavPitts98b}). 
Every subproduct subsystem of $X$ then corresponds to a homogeneous ideal in $A_d$ that contains the commutator ideal. 
When $d<\infty$, every such subproduct subsystem corresponds to a homogeneous ideal in the algebra $\bC[x_1, \ldots, x_d]$ of polynomials in $d$ commuting variables (see \cite[Section 2.3]{DRS11}). 
\end{example}

%%%%%%%%%%%%
\begin{example}\label{ex:subshift}
In dynamical systems, {\em subshifts} are a class of symbolic dynamical systems that have been thoroughly investigated \cite{BSBook}. 
Given a subshift, one can construct a subproduct system $X$ that encodes the subshift (see \cite[Section 12]{ShaSol09}). 
The associated ideal $J_X$ is then the monomial ideal generated by the monomials (in noncommuting variables) corresponding to all {\em forbidden words} for the subshift. 
In this case, the shift operators on the $X$-Fock space are the basic ingredients in the construction of Matsumoto's subshift C*-algebras \cite{Mat97}; these subproduct systems and their algebras were studied extensively in \cite{KakSha19}. 
\end{example}

%%%%%%%%%%%%
\section{Representations of tensor algebras}\label{sec:representation}

In this section we use the following notation: $X$ will be a subproduct system, $d = \dim X_1$, we will assume that there is some fixed orthonormal basis $(e_i)_{i \in \Lambda}$ for $H := X_1$, and we will write $S = (S_i)_{i \in \Lambda}$ for the $X$-shift restricted to the orthonormal basis, that is, $S_i = S(e_i)$.

%%%%%%%%%%%%
\subsection{Universality of tensor algebras}\label{subsec:universality} 

The noncommutative disc algebra $\fA_d$ \cite{Pop96} is the tensor algebra of the full subproduct system that is the full product system over a Hilbert space $H$ with $\dim H = d$; see Example \ref{ex:product}. 
For this algebra we have a noncommutative functional calculus \cite{Pop95}: for every row contraction $T = (T_i)_{i \in \Lambda}$ with $|\Lambda|=d$, there exists a unital completely contractive homomorphism $f \mapsto f(T)$ from $\fA_d$ into the unital closed operator algebra $\ol{\alg}(T)$ generated by $T$, that maps $L_i = S(e_i)$ to $T_i$. 
When thinking of $\fA_d$ as $A_d = A(\fB_d)$, the noncommutative functional calculus when applied to $T \in \ol{\fB}_d$ corresponds to evaluating $f$ at the point $T$, and it allows us to think of $\fA_d$ (or $A_d$) as an algebra of functions defined on row contractions in {\em all} dimensions. 
This functional calculus gives the full shift $L$ the role of a universal row contraction.

The functional calculus allows us to say that a row contraction $T$ {\em annihilates} an ideal $J \triangleleft \fA_d$ if $f(T) = 0$ for all $f \in J$. 
The $X$-shift of a subproduct system $X$ is a universal row contraction that annihilates $J_X$. 
The following is known, and can be dug out of the literature (e.g. \cite{Pop06} or \cite{ShaSol09}), but unfortunately it does not appear in precisely the form we need. 
%%%%%%%%%%%%
\begin{proposition}\label{prop:universal}
Let $X$ be a subproduct system, let $J = J_X$ be the corresponding closed homogeneous ideal in $\fA_d$, where $d = \dim X_1$, and let $(e_i)_{i \in \Lambda}$ be an orthonormal basis for $X_1$. 
For every unital completely contractive representation $\pi : \cA_X  \to B(K)$, the $d$-tuple $T = (T_i)_{i \in \Lambda} \in B(K)^\Lambda$ given by $T_i = \pi(S(e_i))$ is a row contraction that annihilates $J$. 
Conversely, every row contraction $T = (T_i)_{i \in \Lambda} \in B(K)^\Lambda$ that annihilates $J$ determines a unique unital completely contractive representation $\pi : \cA_X  \to B(K)$ given by $\pi(S_i) = T_i$.
\end{proposition}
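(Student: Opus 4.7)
The plan is to prove the two directions separately. The forward direction is essentially formal; the reverse direction rests on a constrained dilation argument, which I expect to be the main obstacle.

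For the forward direction, I first observe that the $X$-shift $S = (S_i)_{i \in \Lambda}$ is itself a row contraction. Indeed, $(\cF_X)^\perp = \bigoplus_n (H^{\otimes n} \ominus X_n)$ is invariant under the full shift $L = (L_i)_{i \in \Lambda}$ on $\cF(H)$: for $f \in H^{\otimes n} \ominus X_n$ one has $L_i f = e_i \otimes f \in H \otimes (H^{\otimes n} \ominus X_n) \subseteq H^{\otimes(n+1)} \ominus X_{n+1}$, by the subproduct inclusion $X_{n+1} \subseteq H \otimes X_n$. So $\cF_X$ is co-invariant under $L$, giving $\sum_i S_i S_i^* \leq P_{\cF_X}\bigl(\sum_i L_i L_i^*\bigr) P_{\cF_X} \leq I$, and $T = (\pi(S_i))_{i \in \Lambda}$ is likewise a row contraction. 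Next, Popescu's noncommutative functional calculus produces a UCC homomorphism $\rho : \fA_d \to \cA_X$ with $\rho(L_i) = S_i$, and by uniqueness $\pi \circ \rho : \fA_d \to B(K)$ is the functional calculus for $T$; thus it suffices to show $\rho(f) = 0$ for every $f \in J_X$. Using Proposition~\ref{prop:grading} and closedness of $J_X$, reduce to the case of $n$-homogeneous $f \in H^{\otimes n} \ominus X_n$; then for any $y \in X_m$,
\[
f \otimes y \in (X_n)^\perp \otimes H^{\otimes m} \subseteq (X_n \otimes X_m)^\perp \subseteq X_{n+m}^\perp,
\]
the last inclusion from the subproduct property, and hence $\rho(f) y = p^X_{n+m}(f \otimes y) = 0$.

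For the reverse direction, uniqueness of $\pi$ is immediate since $\cA_X = \ol{\alg}\{S_i : i \in \Lambda\}$. For existence, the plan is to exhibit $T$ as a co-invariant compression of an inflated $X$-shift and to define $\pi$ by compression. More precisely, I would invoke a constrained dilation theorem in the spirit of Popescu and Bhat: every row contraction $T$ on $K$ annihilating $J_X$ admits an isometric embedding $K \hookrightarrow \cF_X \otimes K'$, for some auxiliary Hilbert space $K'$, whose image is co-invariant for the inflated shift $S \otimes I_{K'}$ and satisfies $T_i = P_K (S_i \otimes I_{K'})|_K$ for every $i \in \Lambda$. Given such a dilation, the prescription
\[
\pi(A) := P_K (A \otimes I_{K'})|_K, \qquad A \in \cA_X,
\]
defines a UCC homomorphism: it is completely contractive as a compression map, and is multiplicative because co-invariance of $K$ means the inflated operators take lower-triangular block form with respect to $K \oplus K^\perp$, so products preserve this form and their $K$-compressions multiply. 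Clearly $\pi(S_i) = T_i$.

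The main difficulty is thus to produce the constrained dilation. My plan is to start from Popescu's minimal isometric dilation of $T$, apply the noncommutative Wold decomposition to split the dilation into a shift part on some $\cF(H) \otimes \cD$ and a Cuntz (row unitary) part, and to use the hypothesis that $T$ annihilates $J_X$ together with the subproduct-system structure to rule out the Cuntz part and confine the shift part to $\cF_X \otimes \cD$. Equivalently, one may recast the problem as showing that the natural map $\bar\rho : \fA_d / J_X \to \cA_X$ induced by $\rho$ is a completely isometric isomorphism, since bijectivity of $\bar\rho$ follows from the forward-direction computation together with its converse via the homogeneity projections (if $\rho(f) = 0$ then $\rho(\Phi_n(f)) = 0$, and evaluating at the vacuum $1 \in X_0$ forces $\Phi_n(f) \in (X_n)^\perp$); the existence of $\pi$ then follows by composing $\bar\rho^{-1}$ with the factorization through $\fA_d/J_X$ of Popescu's functional calculus for $T$. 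The genuinely delicate point, and where the novelty relative to the finite-dimensional case lies, is the case $d = \aleph_0$: the functional-calculus and dilation machinery must be invoked in the infinite-variable setting, and it is presumably here that the main technical work in the body of the paper is concentrated.
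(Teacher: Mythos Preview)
Your forward direction is correct and more explicit than the paper's own treatment. The converse, however, has a genuine gap in the dilation plan: one \emph{cannot} in general rule out the Cuntz part. If $T$ is itself a row unitary annihilating $J_X$ (for instance any Cuntz row unitary when $J_X = 0$), its minimal isometric dilation is $T$ itself, with no shift part whatsoever. More fundamentally, every co-invariant compression of $S \otimes I_{K'}$ is pure (since $S$ is), so no non-pure $T$ admits the embedding $K \hookrightarrow \cF_X \otimes K'$ you describe. The correct constrained dilation has the form $(S \otimes I_{\cD}) \oplus W$ with $W$ a $J_X$-annihilating row unitary, and $W$ must be handled separately; the standard device is to treat the strict case $\|T\|<1$ first (which \emph{is} pure, via the Poisson transform) and then pass to the limit $rT$, $r \nearrow 1$.

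Your alternative route via $\bar\rho$ is legitimate and is in fact mentioned in the paper, immediately after Theorem~\ref{thm:quotient_compression}, as a reversal of the logical order: prove the complete isometry of $\bar\rho$ directly from the distance formula $\operatorname{dist}(f,J_X) = \|P_{\cF_X} f\big|_{\cF_X}\|$ and its matrix version (Arias--Popescu, Davidson--Pitts), then deduce the proposition by factoring Popescu's functional calculus for $T$ through the quotient. But you have only recast the problem, not established the distance formula, and you cannot simply invoke Theorem~\ref{thm:quotient_compression} since the paper's proof of that theorem \emph{uses} the present proposition. The paper's actual argument takes neither of your routes: it appeals to Viselter's general result that unital completely contractive representations of $\cA_X$ correspond bijectively to representations of the subproduct system $X$ (valid for arbitrary $d$), combined with the Shalit--Solel identification of the latter with row contractions annihilating $J_X$, noting that although that identification is stated for polynomial ideals its proof carries over to closed ideals unchanged. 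Contrary to your final remark, the paper does no new technical work for $d = \aleph_0$ here; it is outsourced to these references.
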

\begin{proof}
This is known, but we take a moment to track down the references.
In the case $d<\infty$ this follows from Popescu's constrained dilation theory and von Neumann inequality \cite[Section 2]{Pop06} (see also the variant in \cite[Section 8]{ShaSol09}). 
To treat arbitrary $d$ one can note that the methods of the noncommutative Poisson transform extend to infinite dimensions, but we can alternatively appeal to the (also known) fact that the unital completely contractive representations of a tensor algebra are in one to one correspondence with so-called {\em representations} of the subproduct system $X$; see \cite[Corollary 2.16]{Vis11}, where this is proved in the greatest generality. 
The representations of a subproduct system $X$ are precisely the row contractions annihilating $J_X$ --- this is the content of \cite[Theorem 7.5]{ShaSol09}. Note carefully that while that theorem is stated for for arbitrary $d$, it is stated only for a polynomial ideal; however, the proof of that theorem does work for general closed ideals in $\fA_d$ with essentially no change. 
\end{proof}

%%%%%%%%%%%%
\begin{theorem}\label{thm:quotient_compression}
Let $X$ be a subproduct system and let $J = J_X$ be the corresponding closed homogeneous ideal in $\fA_d$. 
Then the compression map $f \mapsto P_{\cF_X} f \big|_{\cF_X}$ has kernel $J$ and induces a completely isometric isomorphism of $\fA_d /J$ onto $\cA_X = P_{\cF_X} \fA_d \big|_{\cF_X}$. 
\end{theorem}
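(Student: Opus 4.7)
The plan is to proceed in three stages: first realize the compression $\psi(f) := P_{\cF_X} f\big|_{\cF_X}$ as a completely contractive homomorphism from $\fA_d$ into $\cA_X$, then identify its kernel with $J$ via a grading argument, and finally upgrade the resulting algebraic isomorphism to a completely isometric one by matching the universal properties of the two algebras.

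For the first stage, the subproduct relation $X_n \subseteq X_1 \otimes X_{n-1}$ together with the description of $L_i^*$ as a backward shift shows $L_i^* X_n \subseteq X_{n-1}$, so $\cF_X$ is coinvariant under $\fA_d$. Hence $\psi$ is a unital completely contractive homomorphism with $\psi(L_i) = S_i$, and since polynomials in the $L_i$ are dense in $\fA_d$ and those in the $S_i$ are dense in $\cA_X$, the image of $\psi$ is a dense subalgebra of $\cA_X$; that it is all of $\cA_X$ will fall out once the isometry step is done.

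For the kernel, I first verify $J \subseteq \ker \psi$ by treating the $n$-homogeneous generators $f \in H^{\otimes n} \ominus X_n$: for $y \in X_m$ one has $\psi(f) y = P_{X_{n+m}}(f \otimes y)$, which vanishes because $X_{n+m} \subseteq X_n \otimes H^{\otimes m}$ is orthogonal to $(H^{\otimes n} \ominus X_n) \otimes H^{\otimes m}$. For the reverse inclusion, $\psi$ intertwines the Ces\`aro grading projections $\Phi_n$ of Proposition \ref{prop:grading} with their counterparts on $\cA_X$, so $\psi(f) = 0$ forces $\psi(f_n) = 0$ for every homogeneous component $f_n$; evaluating on the vacuum vector $1 \in X_0$ gives $P_{X_n}(f_n \cdot 1) = 0$, i.e.\ $f_n \in X_n^\perp \subseteq J$, and since $f$ is the Ces\`aro norm-limit of finite sums of the $f_n$, closedness of $J$ places $f$ in $J$.

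For the complete isometry I would invoke two universal properties. The operator algebra quotient $\fA_d/J$ is characterized by the property that completely contractive homomorphisms $\fA_d/J \to B(K)$ correspond to cc homomorphisms $\sigma \colon \fA_d \to B(K)$ with $\sigma(J) = 0$; consequently its matrix norms are given by $\|[f]\|_{M_k(\fA_d/J)} = \sup_\sigma \|\sigma_k(f)\|$ over such $\sigma$. Each such $\sigma$ is encoded by the row contraction $T = \sigma(L)$ annihilating $J$, and by Proposition \ref{prop:universal} these $T$ correspond bijectively to cc homomorphisms $\pi \colon \cA_X \to B(K)$ satisfying $\sigma = \pi \circ \psi$. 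The supremum therefore equals $\sup_\pi \|\pi_k(\tilde\psi([f]))\|$, which in turn equals $\|\tilde\psi([f])\|_{M_k(\cA_X)}$ since the identity representation of $\cA_X$ on $\cF_X$ is itself completely isometric and so is attained by $\pi = \id$. The step that requires most care is matching these two universal-property descriptions cleanly; once the dictionary between row contractions annihilating $J$ and cc representations of $\cA_X$ is in place, the norm identity is immediate, and surjectivity of $\psi$ onto $\cA_X$ then follows from the density already established together with the isometry.
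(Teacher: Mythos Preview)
Your argument is correct and follows essentially the same route as the paper: coinvariance of $\cF_X$ gives the completely contractive homomorphism, the kernel is identified with $J$, and the complete isometry is obtained from Proposition \ref{prop:universal}. The only differences are cosmetic---you spell out the kernel computation via the grading (which the paper asserts in one line), and for the isometry you compare suprema over all completely contractive representations, whereas the paper applies Proposition \ref{prop:universal} just once to the single tuple $\dot L = (L_i + J)_{i\in\Lambda}$ in $\fA_d/J$ to manufacture a completely contractive inverse $\cA_X \to \fA_d/J$ directly.
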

\begin{proof}
The compression of $L = (L_i)_{i \in \Lambda}$ is $S = (S_i)_{i\in \Lambda}$, and so we obtain a completely contractive homomorphism $\pi : \fA_d \to \cA_X$ that sends a polynomial $p(L)$ in $L$ to a polynomial $p(S)$ in $S$. 
The functional calculus maps $f \in \fA_d$ to $f(S) \in \cA_X$, and $f(S) = 0$ if and only if $f \in J$. 
Thus $\ker \pi = J$, and $\pi$ induces a completely contractive homomorphism $\ol{\pi} : \fA_d / J \to \cA_X$.
But now letting $\dot{L} = (\dot{L}_i)_{i \in \Lambda}$ be the image of $L$ in the quotient, we have that $\dot{L}$ is a row contraction that annihilates $J$. 
By Proposition \ref{prop:universal}, there is a completely contractive unital homomorphism $\cA_X \to \fA_d/J$ such that $S_i \mapsto \dot{L}_i$. 
It follows that $\ol{\pi}$ must be a completely isometric isomorphism. 
Finally, since the range of $\ol{\pi}$ is equal, on the one hand, to $\cA_X$, while on the other hand it is equal to the range of the compression map $\pi$, we have $\cA_X = P_{\cF_X} \fA_d \big|_{\cF_X}$.
\end{proof}

Alternatively, the above theorem could also be proved by using a commutant lifting approach or the distance formula $d(T,J) = \|P_{\cF_X} T \big|_{\cF_X}\|$ and its matrix valued variant (see \cite{AriPop00,DavPitts98a}, cf. \cite[Proposition 9.7]{SSS18}) and then one could deduce Proposition \ref{prop:universal} from the theorem. 
%however, these methods require working with the weak-$*$ closed versions of the tensor algebras and their preduals, and we preferred to avoid that. 

%%%%%%%%%%%%
\subsection{Finite dimensional representability of tensor algebras}\label{subsec:fin_dim}

For an operator algebra we let $\operatorname{Rep}_{\fin}(\cB)$ denote the space of all unital completely contractive finite dimensional representations of $\cB$. 
Proposition \ref{prop:universal} implies that $\operatorname{Rep}_{\fin}(\cA_X)$ can be identified with the homogeneous nc variety
\[
\ol{V}(J_X) := \{X \in \ol{\fB}_d : f(X) = 0 \text{ for all } f \in J_X\}, 
\]
where the identification is given by 
\[
\operatorname{Rep}_{\fin}(\cA_X) \ni \rho \longleftrightarrow (\rho(S_i))_{i \in \Lambda}  \in \ol{V}(J_X),
\]
and
\[
\ol{V}(J_X) \ni X \longleftrightarrow \rho_X \in \operatorname{Rep}_{\fin}(\cA_X)
\]
where $\rho_X$ is the representation mapping $S_i$ to $X_i$. 
Every element $a \in \cA_X$ can be considered as a function on $\operatorname{Rep}_{\fin}(\cA_X)$ in the usual way $\hat{a}(\rho) = \rho(a)$, and the above identification gives $a$ as a function, which is also denoted by $\hat{a}$, on $\ol{V}(J_X)$. 
Let us call the map $a \mapsto \hat{a}$ the {\em restriction map}. 
Note that $\hat{a} \in A(\ol{V}(J_X))$, that is, $\hat{a}$ is a uniformly continuous nc function on $\ol{V}(J_X)$. 
Our goal in this section is to understand the extent to which the restriction map 
\begin{equation*}
  \mathcal{A}_X \to A(\overline{V}(J_X)), \quad  a \mapsto \hat{a},
\end{equation*}
is faithful.

%%%%%%%%%%%%
\begin{definition}\label{def:rfd}
An operator algebra $\cB$ is said to be {\em residually finite dimensional} (or {\em RFD}) if for every $b \in M_n(\cB)$
\[
\|b\| = \sup\left\{\left\|\pi^{(n)}(b)\right\| : \pi \in \operatorname{Rep}_{\fin}(\cB)\right\}.
\]
\end{definition}

%%%%%%%%%%%%
\begin{theorem}\label{thm:equivalent}
Let $J \triangleleft \fA_d$ be a homogeneous ideal and let $X = X^J$ the corresponding subproduct system. 
The restriction map $\mathcal{A}_X \to A(\overline{V}(J))$ is a complete quotient map.
Moreover, the following are equivalent: 
\begin{enumerate}
\item The ideal $J$ satisfies the Nullstellensatz. 
\item The restriction map $\cA_X \to A(\ol{V}(J))$ is injective. 
\item The restriction map $\cA_X \to A(\ol{V}(J))$ is a completely isometric isomorphism.
\item $\cA_X$ is residually finite dimensional. 
\end{enumerate}
\end{theorem}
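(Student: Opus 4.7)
The plan is to use Theorem \ref{thm:quotient_compression} to identify $\cA_X$ completely isometrically with $\fA_d/J$, and then recognize the restriction map $a \mapsto \hat a$ as a composition of quotient maps. Specifically, if $\dot f \in \fA_d/J$ corresponds to $a \in \cA_X$, then for any $X \in \overline V(J)$ Proposition \ref{prop:universal} shows that the representation $\rho_X$ sends $a$ to $f(X)$ via the nc functional calculus, so $\hat a(X) = f(X)$. Thus the restriction map coincides with the map $\fA_d/J \to A(\overline V(J))$ induced by $f \mapsto f\big|_{\overline V(J)}$, and its kernel is exactly the image of $I(V(J))$ in $\fA_d/J$.

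The first step is to establish that the evaluation $A_d \to A(\overline V(J))$ is a complete quotient map with kernel $I(V(J))$. The kernel description is tautological from $I(V(J)) = I(\overline V(J))$, and density of the range is immediate from \cite[Section 9]{SSS18}, which identifies $A(\overline V(J))$ as the supremum-norm closure of the restriction of polynomials. The complete quotient property — the operator-algebraic identification $A(\overline V(J)) \cong A_d / I(V(J))$ — will be invoked via the noncommutative von Neumann inequality and Poisson transform apparatus. Since $J \subseteq I(V(J))$, this factors as
\[
\fA_d \twoheadrightarrow \fA_d/J \cong \cA_X \twoheadrightarrow \fA_d/I(V(J)) \cong A(\overline V(J)),
\]
so the restriction map $\cA_X \to A(\overline V(J))$ is a complete quotient map with kernel $I(V(J))/J$. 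This gives the first assertion of the theorem.

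Given this, the equivalences follow in short order. For (1) $\Leftrightarrow$ (2), injectivity of the restriction map is equivalent to triviality of its kernel $I(V(J))/J$, which is exactly $J = I(V(J))$, i.e., the Nullstellensatz. For (2) $\Leftrightarrow$ (3), an injective complete quotient map is automatically a completely isometric isomorphism. For (3) $\Rightarrow$ (4), observe that $A(\overline V(J))$ is residually finite dimensional by construction: for any $(f_{ij}) \in M_k(A(\overline V(J)))$ the matrix norm equals $\sup_n \sup_{X \in \overline V(J)_n} \|(f_{ij}(X))\|$, and each evaluation at a finite-dimensional $X$ corresponds via the bijection with $\operatorname{Rep}_{\fin}(\cA_X)$ to a finite-dimensional completely contractive representation of $\cA_X$. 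Conversely, (4) $\Rightarrow$ (2) is direct: if $\hat a = 0$ then $\rho(a) = 0$ for every finite-dimensional representation $\rho$, and residual finite dimensionality forces $a = 0$.

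The main obstacle is the complete quotient identification $A(\overline V(J)) \cong A_d/I(V(J))$ in the infinite-variable setting. In the finite-dimensional case this is handled in \cite{SSS18}, but for $d = \aleph_0$ one has to verify the operator-algebraic quotient structure at all matrix levels, which requires a careful approximation argument lifting matrix-valued uniformly continuous nc functions on $\overline V(J)$ to norm-controlled elements of $A_d$. Once this is in place, the remainder of the proof is essentially formal, boiling down to the triviality of a quotient ideal and to computing norms by finite-dimensional evaluations.
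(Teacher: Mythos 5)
Your proof is correct and follows essentially the same route as the paper: both factor the restriction map as $\fA_d \twoheadrightarrow \fA_d/J \cong \cA_X \to \fA_d/I(\overline{V}(J)) \cong A(\overline{V}(J))$, importing the completely isometric identification $\fA_d/I(\overline{V}(J)) \cong A(\overline{V}(J))$ from \cite[Proposition 9.7]{SSS18}, after which the four equivalences are formal consequences of the kernel being $I(\overline{V}(J))/J$ and of the identification $\operatorname{Rep}_{\fin}(\cA_X) \cong \overline{V}(J)$. The only difference is that the paper simply cites that proposition (having noted that the arguments of \cite{SSS18} extend past the finitely-many-variables setting), whereas you flag it as the step still requiring verification for $d=\aleph_0$.
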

\begin{proof}
By Theorem \ref{thm:quotient_compression}, the compression map induces a completely isometric isomorphism $\mathfrak{A}_d / J \cong \mathcal{A}_X$.
On the other hand, \cite[Proposition 9.7]{SSS18} shows that restriction to $\overline{V}(J)$ induces a completely isometric
isomorphism $\mathfrak{A}_d / I(\overline{V}(J)) \cong A(\overline{V}(J))$. Moreover, we have a commuting diagram
\begin{center}
\begin{tikzcd}
  \mathcal{A}_X \arrow[r] & A(\overline{V}(J)) \\
\mathfrak{A}_d / J \arrow[r] \arrow[u,"\cong"] & \mathfrak A_d/I(\overline{V}(J)) \ar[u,"\cong"],
\end{tikzcd}
\end{center}
where the top row is the restriction map and the bottom row is the natural quotient map.
In particular, we see that the restriction map $\mathcal{A}_X \to A(\overline{V}(J))$ is always a complete quotient map.

As for the equivalent conditions, we first observe that $I(V(J)) = I(\overline{V}(J))$.
Thus, $J$ satisfies the Nullstellensatz if and only if $J = I(\overline{V}(J))$.
By the previous paragraph, this happens if and only if the restriction map is injective.
Furthermore, since the restriction map is always a complete quotient map, injectivity is equivalent to being completely
isometric. Hence (1) $\Longleftrightarrow$ (2) $\Longleftrightarrow$ (3).
Finally, the equivalence $(3) \Longleftrightarrow (4)$ follows immediately from the identification $\operatorname{Rep}_{\fin}(\cA_X) \cong \ol{V}(J_X)$. 
\end{proof}

\begin{remark}
The algebra $A(\overline{V}(J))$ is a quotient of $\mathcal{A}_X$ and is always RFD.
  Moreover, every finite dimensional representation of $\mathcal{A}_X$
  factors through $A(\overline{V}(J))$ because of the identification $\operatorname{Rep}_{\fin}(\mathcal{A}_X) \cong \overline{V}(J)$.
  Thus, whenever $\pi: \cA_X \to \mathcal{B}$ is a complete quotient map onto another RFD algebra $\mathcal{B}$,
  then $\pi$ factors through a complete quotient map $A(\overline{V}(J)) \to \mathcal{B}$.
  In this sense, $A(\overline{V}(J))$ is the largest RFD quotient of $\mathcal{A}_X$.
\end{remark}

In \cite[Theorem 9.5]{SSS18} it was shown that under the assumption that $d< \infty$, every homogeneous closed ideal satisfies the Nullstellensatz. 
In the next theorem we record the fact that if an ideal $J$ is generated by homogeneous polynomials involving only finitely many variables, then $J$ satisfies the Nullstellensatz. 
The next theorem also gives another sufficient condition that works in the case $d = \infty$, namely that the ideal is generated by monomials. 
Monomial ideals are a special class of ideals, but it is worth recalling that subproduct systems associated with monomial ideals give rise to many operator algebras including Cuntz-Krieger and subshift C*-algebras \cite{KakSha19}. 

%%%%%%%%%%%
\begin{theorem}\label{thm:nullstz}
  If a closed homogeneous ideal in $J \triangleleft A_d$ is generated by monomials or if there exists a finite subset $\Lambda' \subset \Lambda$
  such that $J$ is generated by homogeneous polynomials in the variables $(z_i)_{i \in \Lambda'}$, then
\[
J = I(V(J)). 
\]
\end{theorem}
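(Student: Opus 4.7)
The plan is to reduce both cases to the finite-variable Nullstellensatz \cite[Theorem 9.5]{SSS18} via a truncation over finite subsets of variables. For each finite $\Gamma \subseteq \Lambda$, I would work with the subalgebra $A_\Gamma \subseteq A_d$ of nc functions depending only on $(z_i)_{i \in \Gamma}$ (completely isometrically isomorphic to $A_{|\Gamma|}$), together with the completely contractive unital homomorphism $\pi_\Gamma \colon A_d \to A_\Gamma$ obtained by setting $z_j = 0$ for $j \notin \Gamma$; equivalently, $\pi_\Gamma$ is point-evaluation (in the coordinates outside $\Gamma$) at zero, i.e.\ precomposition with the natural embedding $\overline{\fB}_{|\Gamma|} \hookrightarrow \overline{\fB}_d$ by padding with zeros. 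The key analytic fact to invoke at the end is norm-density of polynomials in $A_d$: since every polynomial involves only finitely many variables, for each $f \in A_d$ one has $\pi_\Gamma(f) \to f$ in $A_d$-norm as $\Gamma$ exhausts $\Lambda$.

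The technical heart is the observation that under either hypothesis the maps $\pi_\Gamma$ preserve the ideal: $\pi_\Gamma(J) \subseteq J_\Gamma := J \cap A_\Gamma$. In the monomial case, $\pi_\Gamma$ sends each monomial generator $z^\beta$ either to itself (if $\beta$ uses only letters from $\Gamma$) or to $0$ (otherwise), and both possibilities land in $J_\Gamma$. In the finite-variable case, provided one takes $\Gamma \supseteq \Lambda'$, every generator is a polynomial in $(z_i)_{i \in \Lambda'} \subseteq (z_i)_{i \in \Gamma}$ and is therefore fixed by $\pi_\Gamma$, hence lies in $J_\Gamma$. The same observation yields the dual statement: for any $T \in V(J_\Gamma)$, the zero-padded extension $\widetilde{T} := (T, 0) \in \overline{\fB}_d$ annihilates every generator of $J$ and therefore lies in $V(J)$.

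With these pieces in place the argument is short. Given $f \in I(V(J))$, for each finite $\Gamma$ (containing $\Lambda'$ in the second case) and each $T \in V(J_\Gamma)$ one gets $\pi_\Gamma(f)(T) = f(\widetilde{T}) = 0$, so $\pi_\Gamma(f) \in I(V(J_\Gamma))$ inside $A_\Gamma$. Since $A_\Gamma$ is a finite-variable algebra and $J_\Gamma$ is closed and homogeneous, \cite[Theorem 9.5]{SSS18} gives $J_\Gamma = I(V(J_\Gamma))$, whence $\pi_\Gamma(f) \in J_\Gamma \subseteq J$. Letting $\Gamma$ exhaust $\Lambda$ and using the density-based convergence together with closedness of $J$ yields $f \in J$, as desired. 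I expect the main obstacle to be the stability $\pi_\Gamma(J) \subseteq J_\Gamma$: this is precisely the step that uses the two hypotheses on $J$, and it is exactly the step that must break for general closed homogeneous ideals (e.g.\ those generated by degree-one Fock vectors with full support), consistent with the counterexample the paper announces.
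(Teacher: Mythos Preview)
Your argument is correct, and it takes a genuinely different route from the paper's proof. The paper argues by contraposition: given a homogeneous $f \notin J$, it builds an explicit finite-dimensional point $Z \in V(J)$ with $f(Z) \neq 0$. Concretely, it first compresses a scaled $X$-shift to the truncated Fock space $\cE = X_0 \oplus \cdots \oplus X_n$ to get a tuple $Y$ that annihilates $J$ but not $f$, and then (since $\cE$ may be infinite dimensional) approximates $f$ by a polynomial $p$ in finitely many variables, compresses further to the finite-dimensional subspace generated by $p$-monomials, and zeros out the remaining coordinates. The two hypotheses are used at that last step to verify that the resulting tuple $Z$ still annihilates each generator of $J$.

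Your proof instead truncates the \emph{function} rather than the \emph{point}: you push $f \in I(V(J))$ through the variable-restriction homomorphism $\pi_\Gamma$, use the stability $\pi_\Gamma(J) \subseteq J_\Gamma$ (which is exactly where the hypotheses enter, just as in the paper) to see that zero-padding carries $V(J_\Gamma)$ into $V(J)$, and then invoke the finite-variable Nullstellensatz \cite[Theorem 9.5]{SSS18} as a black box to get $\pi_\Gamma(f) \in J$, followed by a limit. This is cleaner and more modular; the price is that it is not self-contained, whereas the paper's construction essentially re-proves the finite case along the way. The two arguments are in a sense dual (restricting the representation vs.\ restricting the algebra), and both hinge on the same structural fact about the generators, which is why the general counterexample in the paper escapes both.
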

\begin{proof}

Clearly, $J \subseteq I(V(J))$. 
On the other hand, if $f \in A_d \setminus J$, then at least one of the homogeneous components $f_n \notin J$. 
It suffices to show that $f_n \notin I(V(J))$, and we now relabel $f_n$ as $f$ and remember that it is homogeneous of degree $n$. 
Now we consider the subproduct system $X = X_J$ and we form the space $\cE = X_0 \oplus X_1 \oplus \cdots \oplus X_n$. 
Let $1$ denote the copy of $1$ in $X_0 = \bC$. 
For any $r<1$, the compression $Y$ of $(r S(e_i))_{i \in \Lambda}$ to $\cE$ is a strict row contraction such that $f(Y) 1 \neq 0$ while $g(Y) = 0$ for all $g \in J$. 
Now if $\cE$ happened to be finite dimensional then we have $Y \in V(J)$, while $f(Y) \neq 0$, thus $f \notin I(V(J))$ and the proof would be complete (this is precisely how the proof for the case $d < \infty$ works). 
In general, we cannot yet conclude that $Y \in V(J)$, because the space $\cE$ that $Y$ is acting on might be infinite dimensional. 

We overcome this difficulty as follows. 
The function $f \notin J$ that we are considering might not be a polynomial, but it is the limit in the norm of homogeneous polynomials of degree $n$. 
Now let $\epsilon > 0$ be a such that $\epsilon < \frac{\|f(Y)1\|}{3}$ and find a homogeneous polynomial $p$ such that $\|p-f\|_\infty <\epsilon$. 

Assume first that $J$ is generated by monomials.
Let $(z_i)_{i \in \Lambda'}$ be the coordinate functions that appear in $p$, and let $\cF \subseteq \cE$ be the subspace spanned by $q(Y)1$ for all noncommutative polynomials $q$ in the variables $\{z_i\}_{i \in \Lambda'}$ of degree less than or equal to $n$. 
Let $Z = (Z_i)_{i \in \Lambda} $ be the tuple defined such that $Z_i$ is equal to the compression of $Y_i$ to $\cF$ for $i \in \Lambda'$ and $Z_i = 0_\cF$
for $i \in \Lambda \setminus \Lambda'$.

We claim that $Z \in V(J)$. 
First, $Z$ is a strict row contraction and it is acting on the finite dimensional space $\cF$. 
Next, for every monomial $g \in J$, either, $g$ involves only the variables $(z_i)_{i \in \Lambda'}$, in which case $g(Z) = g(Y)\big|_\cF = 0$; or else $g$ involves one of the variables $z_i$ for $i \in \Lambda \setminus \Lambda'$, in which case $g(Z) = 0$.

Finally, we show that $f(Z) \neq 0$, establishing that $f \notin I(V(J))$. 
For this, note that 
\begin{align*}
\|f(Z)1\| &\geq \|p(Z)1\| - \|p(Z)1 - f(Z)1\| \\
&> \|p(Y)1\| - \epsilon \\
&\geq \|f(Y)1\| - \|f(Y)1-p(Y)1\| - \epsilon \\
&>  \frac{\|f(Y)1\|}{3} > 0
\end{align*}
as required.

If $J$ is generated by homogeneous polynomials in the variables $(z_i)_{i \in \Lambda'}$ for a finite set $\Lambda' \subset \Lambda$,
enlarge $\Lambda'$ so that $p$ is also a polynomial in $(z_i)_{i \in \Lambda'}$
and define $\mathcal{F}$ and $Z$ as above. A similar argument then shows that $Z \in V(J)$, but $f(Z) \neq 0$, so that $f \notin I(V(J))$.
\end{proof}

%%%%%%%%%%%%
\begin{example}\label{ex:symIdeal_inf}
Let $J$ be the commutator ideal in $A_d$ (see Example \ref{ex:symIdeal}) for $d = \aleph_0$. 
Then $J$ does not meet the requirements of Theorem \ref{thm:nullstz}. 
Nevertheless, it does satisfy the Nullstellensatz. 
One can show this directly by appealing to \cite[Proposition 2.4]{DavPitts98b}. 
Alternatively, note that in this case, the corresponding $X$-Fock space for $X = X^J$ is the symmetric Fock space. 
Therefore $\cA_X$ is an algebra of multipliers on a reproducing kernel Hilbert space, so it is residually finite dimensional. 
By Theorem \ref{thm:equivalent}, $I(V(J)) = J$. 
\end{example}

%%%%%%%%%%%%
\subsection{Failure of the Nullstellensatz}\label{subsec:failure}

In this section we exhibit a closed homogeneous ideal $J \triangleleft \, \fA_d$ that does not satisfy the Nullstellensatz. 
Consequently, if $X = X^J$ is the corresponding subproduct system, then the tensor algebra $\cA_X$ is not RFD and cannot be represented as an algebra of uniformly continuous nc functions on $V(J)$. 

We will work with $d = \aleph_0$, and simply write $d = \infty$.
It will be natural to work in the setting of nc functions, thus we identify the Fock space $\cF(H)$ with the nc Drury-Arveson space $\cH^2_\infty$, and we identify $\fA_\infty$ with the subspace $A_\infty$ of $\cH^2_\infty$ consisting of uniformly continuous functions on $\fB_\infty$. 

%%%%%%%%%%%%
\begin{proposition}\label{prop:Nullstellensatz_failure}
There exists a closed homogeneous ideal $J \subset \fA_\infty$ such that $J \subsetneq I(V(J))$.
\end{proposition}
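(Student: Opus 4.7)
I would construct an explicit closed homogeneous ideal $J\triangleleft\fA_\infty$ failing the Nullstellensatz, by producing a single degree-$2$ witness $f\in I(V(J))\setminus J$. The construction is driven by the gap between $\ell^1$ and $\ell^2$: in finitely many variables, any ``proportionality'' relation on $T_i^2$ can only be summed over finitely many $i$, but over $\ell^2$ one can pick coefficients that are square-summable yet not summable, and this is exactly what breaks the Nullstellensatz.

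Concretely, I would pick $(c_k)_{k\geq 1}\subset\bC\setminus\{0\}$ with $\sum_k|c_k|^2=1$ and $\sum_k|c_k|=\infty$ (for instance $c_k=\kappa/k$), set $\xi:=\sum_k c_k\,e_k\otimes e_k\in H^{\otimes 2}$, and define the subproduct system $X$ by $X_0=\bC$, $X_1=H=\ell^2$, $X_2=\bC\xi$, and $X_n=\{0\}$ for $n\geq 3$. A short calculation using $c_k\neq 0$ shows $(H\otimes\bC\xi)\cap(\bC\xi\otimes H)=\{0\}$, which forces $X_3\subseteq\{0\}$ and makes $X$ a valid subproduct system. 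Setting $J:=J_X$, Propositions~\ref{prop:grading} and~\ref{prop:ideal_subps} identify the degree-$2$ part of $J$, viewed inside $H^{\otimes 2}$, with $H^{\otimes 2}\ominus\bC\xi$, and every element of $\fA_\infty$ homogeneous of degree $\geq 3$ lies in $J$. The witness is $f:=\sum_k c_k z_k^2 = S(\xi)$; as an element of $\cA_X^{(2)}\cong X_2=\bC\xi$ it is nonzero, so $f\notin J$.

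To show $f\in I(V(J))$, I would fix a finite-dimensional $T=(T_i)\in V(J)$ acting on $\bC^n$. Among the generators of the degree-$2$ part of $J$ one finds both the monomials $e_i\otimes e_j$ ($i\neq j$), which give $T_iT_j=0$, and the balanced combinations $\bar c_j\,e_i\otimes e_i-\bar c_i\,e_j\otimes e_j$, which yield the proportionality
\begin{equation*}
T_i^2 \;=\; \bar c_i\, M
\end{equation*}
for a single operator $M\in M_n(\bC)$ independent of $i$. Substituting, $f(T)=\sum_k c_k\bar c_k\,M=\|c\|_{\ell^2}^2 M=M$, so everything reduces to showing $M=0$. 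The decisive estimate combines $|c_i|\,\|M\|=\|T_i^2\|\leq\|T_i\|^2=\|T_iT_i^*\|$ with $\|A\|\leq\Tr(A)$ for positive operators $A$:
\begin{equation*}
\|M\|\sum_i|c_i| \;\leq\; \sum_i\|T_iT_i^*\| \;\leq\; \sum_i\Tr(T_iT_i^*) \;=\; \Tr\Big(\sum_i T_iT_i^*\Big) \;\leq\; \Tr(I_{\bC^n}) \;=\; n.
\end{equation*}
Since $\sum_i|c_i|=\infty$ while $n$ is finite, $\|M\|=0$. Hence $T_i^2=0$ for every $i$ and $f(T)=M=0$, so $f\in I(V(J))\setminus J$.

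The main obstacle is truly conceptual rather than technical: one must choose $(c_k)\in\ell^2\setminus\ell^1$, which is unavailable over a finite index set. This matches the positive result of Theorem~\ref{thm:nullstz} in the finitely generated case perfectly: if only finitely many $c_k$ are nonzero, the trace inequality above reduces to a bound on a finite sum and gives no information. The example therefore requires the full strength of the infinite-variable setting, and it is the existence of a ``diagonal'' $\xi$ with non-summable weights that makes the Nullstellensatz collapse.
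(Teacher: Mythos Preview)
Your construction is correct and gives a valid counterexample. The overall setup coincides with the paper's: both take the homogeneous ideal whose degree-two part is $H^{\otimes 2}\ominus\bC\xi$ for a diagonal tensor $\xi=\sum_k c_k\,e_k\otimes e_k$, with witness $f=S(\xi)$, and both reduce the problem to showing that every $T\in V(J)$ satisfies $T_i^2=0$ for all $i$. The difference lies in how this last fact is established. The paper chooses $c_k=2^{-k}$ and argues via linear dependence: since the infinitely many matrices $T_i$ live in the finite-dimensional space $M_n(\bC)$, some $T_k$ is a finite linear combination $\sum_{j\neq k}\lambda_jT_j$ of the others, whence $T_k^2=\sum_{j\neq k}\lambda_jT_kT_j=0$ by the relation $T_iT_j=0$; the proportionality $\bar c_j T_i^2=\bar c_i T_j^2$ then propagates this to all $i$. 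Your trace argument is more quantitative and needs the extra hypothesis $(c_k)\notin\ell^1$; in exchange it makes the role of finite dimensionality completely explicit through the bound $\sum_i\|T_iT_i^*\|\le\Tr(I_{\bC^n})=n$. Your closing claim that one ``must'' take $(c_k)\in\ell^2\setminus\ell^1$ is therefore slightly overstated: it is necessary for \emph{your} argument, but the paper's linear-dependence trick shows the Nullstellensatz already fails for the summable choice $c_k=2^{-k}$. (Incidentally, the verification that $(H\otimes\bC\xi)\cap(\bC\xi\otimes H)=\{0\}$ is superfluous, since you set $X_n=\{0\}$ for $n\ge3$ by fiat and the subproduct-system inclusions for $m+n\ge3$ are then trivial.)
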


\begin{proof}
Let $f(z) = \sum_{n=1}^\infty 2^{-n} z_n^2$, let $M = \cH^2_\infty(2) \ominus \mathbb{C} f$ and let $J$ be the closed ideal of $\fA_\infty$ generated by $M$. 
Notice that the degree $2$ part of $J$ is simply $J_2 = M$.
In particular, $f \notin J$.

We will show that $\cH^2_\infty(2) \subset I(V(J))$, which in particular implies that $f \in I(V(J))$.
To this end, we observe that
\begin{enumerate}
\item $z_i z_j \in J$ whenever $i \neq j$, and
\item $2 z_{n+1}^2 - z_n^2 \in J$ for all $n \in \mathbb{N}$.
\end{enumerate}
Suppose now that $T = (T_1,T_2,\ldots) \in V(J)$. Since the entries of $T$ are elements of a finite dimensional vector space, they must be linearly dependent. Thus, there exist $k \in \mathbb{N}$ and scalars $\lambda_j \in \mathbb{C}$, all but finitely many of which are zero, such that
\begin{equation*}
T_k = \sum_{j \neq k} \lambda_j T_j.
\end{equation*}
Multiplying this relation with $T_k$ from the left and using (1) above, we find that
\begin{equation*}
T_k^2 = \sum_{j \neq k} \lambda_j T_k T_j = 0.
\end{equation*}
Relation (2) above then implies that $T_n^2 = 0$ for all $n \in \mathbb{N}$. In combination with (1),
this implies that $\cH^2_\infty(2) \subset I(V(J))$, as desired.
\end{proof}

\begin{remark}
\begin{enumerate}[label=\normalfont{(\alph*)},wide]
\item 
The example constructed in the proof of Proposition \ref{prop:Nullstellensatz_failure}
is in fact commutative, as Relation (1) in the proof shows.
\item Let $J$ be the ideal constructed in the proof of Proposition \ref{prop:Nullstellensatz_failure}.
It follows from (1) and (2) in the proof that $\cH^2_\infty(3) \subset J$. Hence the $X$-shift associated with $J$ is jointly nilpotent of order $3$.
The proof shows that any row contractive tuple of matrices satisfying the relations in $J$ is nilpotent of order $2$.
\end{enumerate}
\end{remark}

%%%%%%%%%%%%
\section{Classification by subproduct systems}\label{sec:classification}

%%%%%%%%%%%%%%%%%%
\subsection{Reduction of the isomorphism problem to existence of graded maps}\label{subsec:reduction}

In this subsection we recall known results about the isomorphism problem for tensor algebras of subproduct systems, results which allow one to restrict attention to {\em graded} maps. 

Recall the maps $\Phi_n : \cA_X \to \cA_X^{(n)}$ from Proposition \ref{prop:grading}. 
%%%%%%%%%%%%
\begin{definition}\label{def:graded_vp}
The {\em vacuum state} is the homomorphism $\Phi_0 \colon \cA_X \to \cA_X^{(0)} = \bC$. 
A homomorphism $\varphi \colon \cA_X \to \cA_Y$ is said to be {\em vacuum preserving} if $\varphi^*$ takes the vacuum state of $\cA_Y$ to the vacuum state of $\cA_X$ (here, $\varphi^*$ denotes the adjoint map $\varphi^* (f) = f \circ \varphi$), and it is said to be {\em graded} if $\varphi(\cA_X^{(n)}) \subseteq \cA_Y^{(n)}$ for all $n \in \mathbb{N}$.
\end{definition}

In \cite[Section 6]{DorMar14}, Dor-On and Markiewicz reduced a general form of Problem \ref{prob:iso} to the problem of whether the existence of an isomorphism implies the existence of a vacuum preserving isomorphism. 

%%%%%%%%%%%%
\begin{proposition}[Propositions 6.12, 6.17, 6.18 and Corollary 6.13 in \cite{DorMar14}]\label{prop:TFAE}
Let $X$ and $Y$ be subproduct systems. 
The following are equivalent: 
\begin{enumerate}
\item There exists a vacuum preserving bounded isomorphism $\varphi \colon \cA_X \to \cA_Y$. 
\item There exists a vacuum preserving completely bounded isomorphism $\varphi \colon \cA_X \to \cA_Y$. 
\item There exists a similarity $W \colon X \to Y$. 
\end{enumerate}
Also, the following are equivalent: 
\begin{enumerate}
\item There exists a vacuum preserving isometric isomorphism $\varphi \colon \cA_X \to \cA_Y$. 
\item There exists a vacuum preserving completely isometric isomorphism $\varphi \colon \cA_X \to \cA_Y$. 
\item There exists an isomorphism $W \colon X \to Y$. 
\end{enumerate}
\end{proposition}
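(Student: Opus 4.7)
The plan is to prove the harder direction (1) $\Rightarrow$ (3) in both sets of equivalences, as (3) $\Rightarrow$ (2) $\Rightarrow$ (1) is a direct construction: given a similarity $V = (V_n)_{n \geq 0}$, the similarity relation with $m=n=0$ and the invertibility of $V_0$ force $V_0 = \id_\bC$, so $W = \bigoplus_n V_n : \cF_X \to \cF_Y$ is bounded, invertible, and preserves the vacuum vector. Using the identity $S^X(x) S^X(y) = S^X(p_{m+n}^X(x\otimes y))$ (derivable from the action of the shifts on the vacuum), I would check that conjugation $\varphi(T) = W T W^{-1}$ sends shifts to shifts, hence restricts to a vacuum-preserving bounded isomorphism $\cA_X \to \cA_Y$, which is completely isometric when $W$ is unitary.

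For the main direction, I would pass through the augmentation ideal $\fM_X := \ker \Phi_0^X$ and establish the identification
\[
\fM_X^n \;=\; \overline{\bigoplus_{k \geq n} \cA_X^{(k)}} \;=\; \bigcap_{0 \leq k < n} \ker \Phi_k^X \qquad \text{for every } n \geq 0.
\]
The inclusion $\fM_X^n \subseteq \bigcap_{k<n}\ker \Phi_k^X$ follows because a Ces\`aro expansion of a product of $n$ elements of $\fM_X$ contains only monomials of degree $\geq n$. For the reverse inclusion, any $x \in X_n \subseteq X_1^{\otimes n}$ is a norm limit of projections $p_n$ of simple tensors, and $S(p_n(y_1 \otimes \cdots \otimes y_n)) = S(y_1)\cdots S(y_n) \in \fM_X^n$ shows $\cA_X^{(n)} \subseteq \fM_X^n$ by the isometric closure from Proposition \ref{prop:grading}; chaining $\cA_X^{(k)} \subseteq \fM_X^k \subseteq \fM_X^n$ for $k \geq n$ and passing to closures finishes the identification.

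With this in hand, a vacuum-preserving isomorphism $\varphi$ satisfies $\varphi(\fM_X) = \fM_Y$ and hence $\varphi(\fM_X^n) = \fM_Y^n$. Thus for $T \in \cA_X^{(n)}$ the components $\Phi_k^Y(\varphi(T))$ vanish for $k < n$, and
\[
V_n^{\cA} \,:=\, \Phi_n^Y \circ \varphi\big|_{\cA_X^{(n)}} : \cA_X^{(n)} \to \cA_Y^{(n)}
\]
defines a map with $\|V_n^{\cA}\| \leq \|\varphi\|$; the symmetric construction applied to $\varphi^{-1}$ yields a two-sided inverse bounded by $\|\varphi^{-1}\|$. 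To verify the similarity relation, I would expand $\varphi(S) = \sum_{k \geq m} A_k$ and $\varphi(T) = \sum_{l \geq n} B_l$ for $S \in \cA_X^{(m)}$ and $T \in \cA_X^{(n)}$; the constraints $k \geq m,\ l \geq n,\ k+l=m+n$ force $(k,l) = (m,n)$, giving $\Phi_{m+n}^Y(\varphi(S)\varphi(T)) = A_m B_n = V_m^{\cA}(S)\, V_n^{\cA}(T)$. Translating via the isometries $S^X, S^Y$ of Proposition \ref{prop:grading} and the identity $S(x)S(y) = S(p_{m+n}(x\otimes y))$ then produces Hilbert space maps $\hat V_n : X_n \to Y_n$ satisfying exactly \eqref{eq:similarity}.

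For the isometric version, if $\varphi$ is (completely) isometric then both $V_n^{\cA}$ and its inverse are (completely) contractive, so $V_n^{\cA}$ is a (completely) isometric isomorphism $\cA_X^{(n)} \to \cA_Y^{(n)}$; since the operator-norm structure on these spaces coincides with the Hilbert space structure on $X_n, Y_n$ via Proposition \ref{prop:grading}, the maps $\hat V_n$ are surjective Hilbert space isometries and hence unitaries. The hard part will be the filtration identification of the second paragraph: in finite dimensions it reduces to a routine polynomial computation, but in the infinite-dimensional case I would need to carefully exploit the Ces\`aro summability from Proposition \ref{prop:grading} and the isometric identification $S : X_n \cong \cA_X^{(n)}$ to handle general non-polynomial elements.
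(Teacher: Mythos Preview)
Your proposal is correct and follows essentially the same route as the paper's sketch (which in turn defers to \cite{DorMar14}): the construction $(3)\Rightarrow(2)$ via $W=\bigoplus_n V_n$ is identical, and your map $V_n^{\cA}=\Phi_n^Y\circ\varphi|_{\cA_X^{(n)}}$ is exactly the ``dropping higher order terms'' modification $\tilde\varphi$ that the paper describes. Your filtration identity $\overline{\fM_X^n}=\bigcap_{k<n}\ker\Phi_k^X$ is the mechanism that makes this modification well defined and invertible; the paper leaves this implicit in its citation of \cite[Propositions 6.18--6.19]{DorMar14}. One small remark: in your multiplicativity step it is cleaner to argue via the decomposition $\varphi(S)=A_m+R_S$ with $R_S\in\overline{\fM_Y^{m+1}}$ (and similarly for $T$) rather than by manipulating the Ces\`aro sums termwise, since the latter are not absolutely convergent; and note that the paper records the sharper fact (from \cite[Theorem 9.7]{ShaSol09}) that a vacuum-preserving \emph{isometric} isomorphism is already graded, whereas your argument only extracts the unitaries $\hat V_n$ --- which is all that is needed for the stated conclusion.
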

Let us say a few words about the proof of Proposition \ref{prop:TFAE}. 
We indicated right after Definition \ref{def:subps_similar} how a similarity/isomorphism between the subproduct systems gives rise to a completely bounded/isometric isomorphism between the tensor algebras. 
Conversely, assume first that we have a graded bounded isomorphism $\varphi \colon \cA_X \to \cA_Y$. 
In this case one can check that the ``restriction" of $\varphi$ to $\cA_X^{(n)} \cong X_n$ defines maps $V_n : X_n \to Y_n$, in other words
\[
V_n \colon x \in X_n \mapsto  S^{-1}(\varphi(S(x)) \in Y_n ,
\]
and that these maps assemble to form a similarity of subproduct systems \cite[Proposition 6.12]{DorMar14}. 
Furthermore, by Proposition \ref{prop:grading}, if $\varphi$ is isometric then $V$ is an isomorphism of subproduct systems. 

If $\varphi$ is not necessarily graded, but merely vacuum preserving, then one can show that $\varphi$ can be modified to a graded isomorphism $\tilde{\varphi}$ by ``dropping higher order terms", that is
\[
\tilde{\varphi}\big|_{\cA_X^{(n)}} = \Phi_n \circ \varphi\big|_{\cA_X^{(n)}} . 
\]
See Proposition 6.18 and 6.19 in \cite{DorMar14} (this modification is needed for the case of bounded isomorphism; a vacuum preserving {\em isometric} isomorphism is already graded, see \cite[Theorem 9.7]{ShaSol09}). 

Thus, the isomorphism problem will be settled once we show that the existence of an isomorphism implies the existence of a vacuum preserving isomorphism.

%%%%%%%%%%%%
\begin{remark}
The reader might wonder whether the notions of ``isomorphic" and ``similar" subproduct systems are actually different. 
The answer is yes. 
The tensor algebras corresponding to commutative, radical ideals in the case $d < \infty$ (see Example \ref{ex:symIdeal}) have been completely classified in terms of the geometry of the zero sets of the ideals in \cite{DRS11} and \cite{Har12}, and by using this result (together with Theorem \ref{thm:TFAE}) it is easy to exhibit examples of similar but not isomorphic subproduct systems. 
For example, it follows that if the zero sets of the ideals $J_X$ and $J_Y$ are both equal to a union of two complex lines, then $X$ and $Y$ are similar, while $X$ and $Y$ are isomorphic only when one zero set is the image of the other zero set under a unitary. 

It is often technically more tractable to classify subproduct systems than to classify the operator algebras, and this is part of the motivation for the formulation of the isomorphism problem. 
For example, within the class of subproduct systems corresponding to subshifts (Example \ref{ex:subshift}), it was shown in \cite[Theorem 9.2]{KakSha19} that algebraic isomorphism of the tensor algebras is equivalent to completely isometric isomorphism of the tensor algebras; this is most readily seen by showing that, in this context, similarity of the subproduct systems implies isomorphism of the subproduct systems (see \cite[Remark 9.4]{KakSha19}). 
\end{remark}

%%%%%%%%%%%%
\subsection{The disc trick}\label{subsec:disc_trick}

%This section is devoted to proving Theorem \ref{thm:TFAE}. 
Let $X$ and $Y$ be two subproduct systems. We will show that the tensor algebras $\cA_X$ and $\cA_Y$ are isometrically isomorphic if and only if $X$ is isomorphic to $Y$, and that $\cA_X$ and $\cA_Y$ are completely boundedly isomorphic if and only if $X$ is similar to $Y$. 
As explained in Section \ref{subsec:reduction}, Proposition \ref{prop:TFAE} reduces our task to showing that if there exist an isometric or a completely bounded isomorphism $\varphi : \cA_X \to \cA_Y$, then there is a vacuum preserving one. 
Showing how isomorphisms give rise to vacuum preserving ones can be achieved using a technique that has come to be known as ``the disc trick" (see \cite{OrrBlog}), which becomes available after one shows that the induced map $\varphi^*$ between the character spaces sends a disc to a disc.
In \cite{DRS11} this was shown using ideas from elementary algebraic geometry, but it is not clear whether these methods extend to the infinite dimensional case. 
A different approach for showing that $\varphi^*$ maps a disc to a disc was introduced in \cite{Hartz17a} to treat certain {\em weighted} tensor algebras with finite dimensional fibers; this approach is very general and can be adapted to the infinite dimensional setting. 

Since the following two lemmas are of independent interest and are applicable in a wider scope, it will be convenient for us to somewhat change our notation as follows.
If $\mathcal{E}$ is a Hilbert space, we let $B_1(\mathcal{E})$ denote the open unit ball of $\mathcal{E}$.
A non-empty subset $A \subset B_1(\mathcal{E})$ is said to be {\em homogeneous} if $x \in A$
implies $\mathbb{C} x \cap B_1(\mathcal{E}) \subset A$ (cf. the definition of homogeneous set right after Definition \ref{def:hom_ideal}).
Clearly, every homogeneous set contains the origin. 
For every $\lambda \in \bT$, let $U_\lambda: \mathcal{E} \to \mathcal{E}$ denote the gauge transformation $z \mapsto \lambda z$. 
Clearly a homogeneous subset $A \subset B_1(\cE)$ is invariant under the gauge transformations. 
We require the following ad hoc definition.

%%%%%%%%%%%%%%%%
\begin{definition}
Let $\cE,\cF$ be Hilbert spaces and let $A \subset \cE$ be a homogeneous set.
A map $F: A \to \mathcal{F}$ is said to be {\em holomorphic} if for all $x \in A \setminus \{0\}$ and all $y \in \mathcal{F}$, the function
\begin{equation*}
  \mathbb{D} \to \mathbb{C}, \quad t \mapsto \Big \langle F \Big( t \frac{x}{\|x\|} \Big) , y \Big\rangle,
\end{equation*}
is holomorphic. 
A {\em biholomorphism} is a bijective holomorphic map with holomorphic inverse.
\end{definition}

We begin with the following standard adaptation of the maximum modulus principle
and the Schwarz lemma,
which is a slight generalization of Lemmas 9.1 and 9.2 in \cite{Hartz17a}.
The proof carries over almost verbatim.

\begin{lemma}
  \label{lem:Schwarz}
  Let $A \subset B_1(\mathcal{E})$ be homogeneous and let $F: A \to \overline{B_1(\mathcal{F})}$
  be a holomorphic map.
  \begin{enumerate}[label=\normalfont{(\alph*)}]
    \item If $F$ is not constant, then $\|F(z)\| < 1$ for all $z \in A$.
    \item Suppose that $F(0) = 0$. Then $\|F(z)\| \le \|z\|$ for all $z \in A$.
      If equality holds for some $z \neq 0$, then
  \begin{equation*}
    F \Big( t \frac{z}{\|z\|} \Big) = t \frac{F(z)}{\|F(z)\|} \quad \text{ for all }  t \in \mathbb{D}.
  \end{equation*}
  \end{enumerate}
\end{lemma}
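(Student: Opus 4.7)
The strategy is to reduce both assertions to their classical one-variable analogues (the maximum modulus principle and the Schwarz lemma) by restricting $F$ to one-dimensional complex discs inside $A$. Homogeneity of $A$ is tailor-made for this: for every $z \in A \setminus \{0\}$ and every $t \in \mathbb{D}$, we have $t z/\|z\| \in \mathbb{C} z \cap B_1(\mathcal{E}) \subset A$, so the disc $\{t z/\|z\| : t \in \mathbb{D}\}$ lies in $A$ and the composition $t \mapsto F(t z/\|z\|)$ is a well-defined holomorphic map $\mathbb{D} \to \overline{B_1(\mathcal{F})}$ in the sense that each scalar function $t \mapsto \langle F(tz/\|z\|), y\rangle$ is holomorphic.

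\textbf{Part (a).} I would argue by contrapositive: assume $\|F(z_0)\| = 1$ for some $z_0 \in A$ and deduce that $F$ is constant. Introduce the auxiliary scalar function
\begin{equation*}
g(z) := \langle F(z), F(z_0)\rangle, \qquad z \in A,
\end{equation*}
which satisfies $|g| \le 1$ on $A$ (by Cauchy--Schwarz) and $g(z_0) = \|F(z_0)\|^2 = 1$. Restricting $g$ to the disc through $z_0$ gives a holomorphic self-map of $\overline{\mathbb{D}}$ attaining modulus $1$ at the interior point $t = \|z_0\|$; the classical maximum modulus principle forces this restriction to be identically $1$, in particular $g(0) = 1$. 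The equality case of Cauchy--Schwarz then gives $F(0) = F(z_0)$. For an arbitrary $z \in A \setminus \{0\}$, apply the same argument to the restriction of $g$ to the disc through $z$: this is holomorphic, bounded by $1$, and equals $1$ at $t = 0$, so it is identically $1$, yielding $g(z) = 1$ and hence $F(z) = F(z_0)$.

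\textbf{Part (b).} For $z \in A \setminus \{0\}$, consider $G : \mathbb{D} \to \overline{B_1(\mathcal{F})}$ defined by $G(t) = F(t z/\|z\|)$, so that $G(0) = 0$. For each unit vector $y \in \mathcal{F}$, the scalar function $t \mapsto \langle G(t), y\rangle$ is a holomorphic self-map of $\overline{\mathbb{D}}$ vanishing at $0$, hence bounded by $|t|$ via the classical Schwarz lemma; taking the supremum over unit $y$ gives $\|G(t)\| \le |t|$, and evaluating at $t = \|z\|$ yields $\|F(z)\| \le \|z\|$. Suppose now equality holds for some $z \neq 0$. Set $y := F(z)/\|F(z)\|$ and consider $h(t) := \langle G(t), y\rangle/t$, which extends to a holomorphic self-map of $\overline{\mathbb{D}}$ by the preceding estimate, and attains modulus $1$ at $t = \|z\|$. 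By maximum modulus $h \equiv 1$, so $\langle G(t), y\rangle = t$ on $\mathbb{D}$. Combined with $\|G(t)\| \le |t|$, the equality case of Cauchy--Schwarz gives $G(t) = t y = t F(z)/\|F(z)\|$, which is exactly the claimed identity.

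\textbf{Expected difficulty.} There is no substantive obstacle: the argument is a transparent transfer of the classical one-variable principles across the fibration of $A$ by complex discs, which the homogeneity hypothesis provides for free. The only point requiring minor care is to verify that the one-dimensional restrictions are genuinely holomorphic self-maps of $\overline{\mathbb{D}}$, but this is immediate from the hypotheses on $F$ together with Cauchy--Schwarz. This matches the remark in the statement that the proof carries over almost verbatim from Lemmas 9.1 and 9.2 of \cite{Hartz17a}.
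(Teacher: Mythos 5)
Your proof is correct and follows essentially the same route as the paper's: restrict to the discs $\{t z/\|z\| : t \in \mathbb{D}\}$ provided by homogeneity, pair with $F(z_0)$ (resp.\ $F(z)/\|F(z)\|$), and invoke the maximum modulus principle, the Schwarz lemma, and the equality case of Cauchy--Schwarz. The only cosmetic difference is that in part (b) you obtain $\|G(t)\|\le |t|$ by taking a supremum over all unit vectors $y$, while the paper works only with the single direction $F(z)/\|F(z)\|$; both are fine.
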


\begin{proof}
  In both cases, we may assume that $A \neq \{0\}$.

  (a) Suppose that $\|F(w)\| = 1$ for some $w \in A$.
  We first show that $F(0) = F(w)$, for which we may assume that $w \neq 0$.
  Then the holomorphic function
  \begin{equation*}
    \mathbb{D} \to \overline{\mathbb{D}}, \quad t \mapsto
    \Big \langle F \Big( t \frac{w}{\|w\|} \Big), F(w) \Big \rangle,
  \end{equation*}
  maps $\|w\|$ to $1$ and hence is identically $1$ by the maximum modulus principle.
  Thus, equality holds in the Cauchy--Schwarz inequality, so $F( t \frac{w}{\|w\|}) = F(w)$
  for all $t \in \mathbb{D}$, and in particular $F(0) = F(w)$.
  
  Next, if $z \in A \setminus \{0\}$ is arbitrary, we apply the maximum modulus principle and the equality case of the Cauchy--Schwarz inequality to
  \begin{equation*}
    \mathbb{D} \to \overline{\mathbb{D}}, \quad t \mapsto \Big \langle F \Big( t \frac{z}{\|z\|} \Big), F(0) \Big \rangle,
  \end{equation*}
  to find that $F(t \frac{z}{\|z\|}) = F(0)$ for all $t \in \mathbb{D}$; hence $F(z) = F(0)$.
  Therefore, $F$ is constant.

  (b) Let $z \in A \setminus \{0\}$ and assume that $F(z) \neq 0$.
  The Schwarz lemma shows that the function
  \begin{equation*}
    f: \mathbb{D} \to \overline{\mathbb{D}}, \quad
    t \mapsto \Big \langle F \Big( t \frac{z}{\|z\|} \Big), \frac{F(z)}{\|F(z)\|} \Big\rangle,
  \end{equation*}
  satisfies $|f(t)| \le |t|$ for all $t \in \mathbb{D}$. The first statement follows by taking $t = \|z\|$.

  If $\|F(z)\| = \|z\|$, then $f(\|z\|) = \|z\|$, hence $f$ is the identity on $\mathbb{D}$
  by the equality case in the Schwarz lemma. Since
  $\|F(t \frac{z}{\|z\|})\| \le |t|$ for all $t \in \mathbb{D}$ by the first part, we also have equality in the Cauchy--Schwarz inequality,
  so $F(t \frac{z}{\|z\|}) = t \frac{F(z)}{\|F(z)\|}$ for all $t \in \mathbb{D}$.
\end{proof}

The following result will allow the reduction to the vacuum preserving case.

\begin{lemma}
  \label{lem:disc_trick}
  Let $\mathcal{E},\mathcal{F}$ be Hilbert spaces, let $A \subset B_1(\mathcal{E})$ and $B \subset B_1(\mathcal{F})$
  be homogeneous sets and let $F: A \to B$ be a biholomorphism. Then there exist
  $\lambda, \mu \in \mathbb{T}$ such that
  \begin{equation*}
    F \circ U_\lambda \circ F^{-1} \circ U_\mu \circ F : A \to B
  \end{equation*}
  maps $0$ to $0$.
\end{lemma}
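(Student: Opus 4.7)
My plan is to reformulate the conclusion as a statement about the action of $F^{-1}$ on a single radial disc, pin that action down explicitly via the equality case of Lemma \ref{lem:Schwarz}(b), and then solve a short scalar equation.

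Set $v_0 := F^{-1}(0) \in A$ and $w_0 := F(0) \in B$. Tracing the composition yields
\[
(F \circ U_\lambda \circ F^{-1} \circ U_\mu \circ F)(0) \;=\; F\bigl(\lambda F^{-1}(\mu w_0)\bigr),
\]
which, since $F$ is injective and $F(v_0)=0$, vanishes iff $\lambda F^{-1}(\mu w_0) = v_0$. Hence the lemma is equivalent to the existence of $\mu \in \mathbb{T}$ with $F^{-1}(\mu w_0) \in \mathbb{T}\, v_0$, with the matching $\lambda \in \mathbb{T}$ then forced. If $w_0 = 0$ then $v_0 = 0$ and $\lambda = \mu = 1$ work, so we may assume $w_0 \neq 0$ and set $s := \|w_0\|$, $w := w_0/s$, and (once $v_0 \neq 0$ has been established) $v := v_0/\|v_0\|$.

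To control $F^{-1}$ on the disc $\mathbb{D} w$, introduce
\[
G : \mathbb{D} \to \overline{B_1(\mathcal{E})}, \qquad G(r) := F^{-1}(r w),
\]
well-defined by homogeneity of $B$, with $G(0) = v_0$ and $G(s) = F^{-1}(w_0) = 0$. Let $\phi(r) := (s-r)/(1-sr)$ be the Möbius involution of $\mathbb{D}$ swapping $0$ and $s$, and set $\tilde{G} := G \circ \phi$. Then $\tilde{G} : \mathbb{D} \to \overline{B_1(\mathcal{E})}$ is holomorphic in the paper's sense, since for each $\xi \in \mathbb{T}$ and each $y \in \mathcal{E}$ the scalar section $t \mapsto \langle \tilde{G}(t\xi), y \rangle$ equals $h(\phi(t\xi))$ for the scalar holomorphic function $h(u) := \langle F^{-1}(uw), y\rangle$, and $\tilde{G}(0) = 0$, $\tilde{G}(s) = v_0$. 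Lemma \ref{lem:Schwarz}(b) gives $\|v_0\| \le s$. The symmetric construction, swapping the roles of $F$ and $F^{-1}$ (i.e.\ starting from $r \mapsto F(rv)$ and applying the analogous Möbius involution with parameter $\|v_0\|$), yields $\|w_0\| \le \|v_0\|$. Therefore $\|v_0\| = \|w_0\| = s > 0$, placing us in the equality case of Lemma \ref{lem:Schwarz}(b) at $z = s$.

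The equality case forces $\tilde{G}(t) = tv$ for all $t \in \mathbb{D}$, and since $\phi$ is an involution this unwinds to
\[
F^{-1}(uw) \;=\; \frac{s-u}{1-su}\, v \qquad (u \in \mathbb{D}).
\]
Specializing $u = \mu s$ with $\mu \in \mathbb{T}$, the membership $F^{-1}(\mu w_0) \in \mathbb{T}\,v_0 = s\,\mathbb{T}\, v$ becomes $|1-\mu| = |1 - s^2 \mu|$; writing $\mu = e^{i\theta}$, this reduces to $\cos\theta = (1+s^2)/2 \in (1/2,1)$, which has two solutions on $\mathbb{T}$. Either such $\mu$, paired with $\lambda := (1-s^2\mu)/(1-\mu) \in \mathbb{T}$, completes the proof. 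The only step that requires care is verifying that $\tilde{G}$ is holomorphic in the sense defined in the paper (since composition of this kind of holomorphic map with an arbitrary holomorphic curve need not be holomorphic in general), but as indicated this reduces to one-variable complex analysis because the inner map is a scalar Möbius on the single fixed direction $w$; everything else is the Schwarz equality case combined with a symmetric-step and a brief trigonometric calculation.
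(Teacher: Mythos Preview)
Your proof is correct and follows the paper's strategy closely through the Schwarz-lemma step: both arguments precompose with a M\"obius involution to obtain $\|v_0\|=\|w_0\|$, and both then invoke the equality case of Lemma~\ref{lem:Schwarz}(b). The difference is only in the finishing move. The paper uses the equality case to conclude that $F$ maps the disc $(\bC a)\cap A$ onto $(\bC b)\cap B$, and then appeals to the geometric ``disc trick'': the circle $C=\{\mu F(0):\mu\in\bT\}$ pulls back under $F^{-1}$ to a circle through $0$ with $F^{-1}(0)$ inside, and a rotation makes its boundary hit $F^{-1}(0)$. You instead extract the explicit formula $F^{-1}(uw)=\frac{s-u}{1-su}\,v$ from the equality case and solve the resulting modulus condition $|1-\mu|=|1-s^2\mu|$ by hand. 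Your route is more self-contained and gives the solutions $\mu$ explicitly (and your care in checking that $\tilde G$ is holomorphic in the paper's ad hoc sense is well placed); the paper's geometric argument is slightly more conceptual and makes clear why the trick works for any biholomorphism between two discs, not just the specific M\"obius involution singled out by the equality case.
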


\begin{proof}
  We may assume that $F(0) \neq 0$, and define $b = F(0)$ and $a = F^{-1}(0)$.
  In the first step, we show that $F$ maps the disc $(\mathbb{C} a) \cap A \subseteq B_1(\mathcal{E})$ onto the disc
  $(\mathbb{C} b) \cap B \subseteq B_1(\mathcal{F})$.
  To this end, let
  \begin{equation*}
    f: \mathbb{D} \to B, \quad t \mapsto F \Big( t \frac{a}{\|a\|} \Big),
  \end{equation*}
  and let $\theta$ be a biholomorphic automorphism of $\mathbb{D}$ mapping $0$ to $\|a\|$ and vice versa.
  Lemma \ref{lem:Schwarz} (b), applied to $h = f \circ \theta$, shows that
  \begin{equation*}
    \|b\| = \|h(\|a\|)\| \le \|a\|.
  \end{equation*}
  By symmetry, $\|a\| \le \|b\|$, so equality holds. The second part of Lemma \ref{lem:Schwarz} (b)
  now shows that $h$ maps $\mathbb{D}$ onto $(\mathbb{C} b) \cap B_1(\mathcal{F})$. Hence $F$
  maps $(\mathbb{C} a) \cap B_1(\mathcal{E})$ onto $(\mathbb{C} b) \cap B_1(\mathcal{F})$,
  which completes the proof of the first step.

Now that we know that $F$ maps the disc $D_1 = (\mathbb{C} a) \cap A$ onto the disc $D_2 = (\mathbb{C} b) \cap B$, the known versions of the disc trick yield the result; see \cite[Proposition 4.7]{DRS11}, \cite[Lemmas 9.5 and 9.6]{Hartz17a} or \cite{OrrBlog}. 
For completeness we repeat the argument. 
Assume that $F(0) \neq 0$, for otherwise there is nothing to prove. 
Define
\[
C = \{U_\mu \circ F (0) : \mu \in \bT\} \subset D_2, 
\]
a circle centered at $0$ with radius $\|F(0)\|$. 
Then $F^{-1}(C) \subset D_1$ is a circle the passes through $0$, with $F^{-1}(0)$ in its interior. 
Rotate $F^{-1}(C)$ until its boundary hits $F^{-1}(0)$ and {\em voil\`a}--- we have found $\mu, \lambda \in \bT$ such that $F^{-1}(0) = U_\lambda \circ F^{-1} \circ U_\mu \circ F(0)$, or
\[
F \circ U_\lambda \circ F^{-1} \circ U_\mu \circ F(0) = 0.
\] 
Since $A$ and $B$ are homogeneous and $F : A \to B$ a biholomorphism, we have that $F \circ U_\lambda \circ F^{-1} \circ U_\mu \circ F : A \to B$ is a biholomorphism mapping $0$ to $0$, as required. 
\end{proof}

%%%%%%%%%%%%
\subsection{Classification}\label{subsec:classification}

We let $\ol{\bB}_d$ denote the closed unit ball of a Hilbert space $H$ with $\dim H = d$. 
We shall consider some fixed orthonormal basis $\{e_i\}_{i \in \Lambda}$ for $H$, and use the identification $h \leftrightarrow \left( \langle h, e_i \rangle \right)_{i \in \Lambda}$ to think of elements in $\ol{\bB}_d$ as $d$-tuples of scalar row contractions. 
Thus, $\ol{\bB}_d$ can be considered as the first level $\ol{\fB}_d(1)$ of the closed nc unit ball $\ol{\fB}_d$.
Likewise, we let $\bB_d$ denote the corresponding open unit ball. 

If $J \subset A_d$ is an ideal, we let $\ol{Z}(J)$ denote the first level of the nc set $\ol{V}(J)$, that is
\begin{equation*}
\ol{Z}(J) = \{ z \in \overline{\mathbb{B}}_d: f(z) = 0 \text{ for all } f \in J \}
\end{equation*}
is the scalar vanishing locus of $J$ in the closed ball $\ol{\bB}_d$. 
We also write $Z(J) = \ol{Z}(J) \cap \mathbb{B}_d$.
We endow $\ol{Z}(J)$ with the (relative topology of the) weak topology.

If $\cB$ is a Banach algebra, we let $\cM(\cB)$ denote the character space of $\cB$, that is, the space of all nonzero homomorphisms of $\cB$ into $\bC$. 

%%%%%%%%%%%%%
\begin{proposition}
\label{prop:character_space}
Let $J \triangleleft A_d$ be a homogeneous ideal, let $X = X^J$ be the associated subproduct system
and let $\mathcal{A}_X$ be the tensor algebra of $X$. Then
\begin{equation*}
\mathcal{M}(\mathcal{A}_X) \to \ol{Z}(J), \quad \chi \mapsto (\chi(S_i))_{i \in \Lambda},
\end{equation*}
is a homeomorphism.
\end{proposition}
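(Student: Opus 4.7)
The strategy is to identify $\Psi : \chi \mapsto (\chi(S_i))_{i \in \Lambda}$ as a continuous bijection from the weak-* compact space $\cM(\cA_X)$ onto the Hausdorff space $\ol{Z}(J)$, which is automatically a homeomorphism.

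For bijectivity, I would appeal to Proposition \ref{prop:universal} applied to the one-dimensional Hilbert space $K = \bC$. Any nonzero homomorphism $\chi : \cA_X \to \bC$ is unital (since $\chi(1)^2 = \chi(1) \neq 0$) and contractive, because the standard argument $\chi(a) \in \sigma(a)$ gives $|\chi(a)| \le r(a) \le \|a\|$. Since the codomain $\bC$ is commutative, contractivity of $\chi$ is equivalent to complete contractivity, so $\chi$ is a unital completely contractive representation of $\cA_X$ on $\bC$. Proposition \ref{prop:universal} then produces a bijection between such $\chi$ and scalar row contractions $T = (T_i)_{i \in \Lambda}$ annihilating $J$, that is, points of $\ol{Z}(J) \subset \ol{\bB}_d$; under this bijection, $\chi$ corresponds to the tuple $(\chi(S_i))_{i \in \Lambda}$.

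For the topological half, I would equip $\cM(\cA_X)$ with the usual Gelfand (weak-*) topology. If $\chi^{(\alpha)} \to \chi$ in this topology, then $\chi^{(\alpha)}(S_i) \to \chi(S_i)$ for every $i \in \Lambda$, i.e.\ the image tuples converge coordinatewise in the orthonormal basis $\{e_i\}$; since $\ol{\bB}_d$ is norm-bounded, coordinatewise convergence there agrees with weak convergence in $H \cong \ell^2(\Lambda)$, proving that $\Psi$ is continuous.

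To conclude, I would observe that $\cM(\cA_X)$ is weak-* compact by Banach--Alaoglu: it is a weak-* closed subset of the unit ball of $\cA_X^*$, the relations $\chi(ab) - \chi(a)\chi(b) = 0$ cutting out a weak-* closed set for each fixed pair $a,b \in \cA_X$. Meanwhile $\ol{Z}(J)$, as a subset of $H$ with its weak topology, is Hausdorff. A continuous bijection from a compact space onto a Hausdorff space is automatically a homeomorphism, which completes the argument. The only mildly delicate point is verifying that abstract characters on the noncommutative algebra $\cA_X$ genuinely qualify as UCC representations so that Proposition \ref{prop:universal} applies, and this reduces to the automatic complete contractivity of characters noted above.
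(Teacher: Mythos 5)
Your proposal is correct and follows essentially the same route as the paper: identify characters with one-dimensional unital completely contractive representations (hence, via Proposition \ref{prop:universal}, with points of $\ol{Z}(J)$), check continuity using that weak convergence on bounded sets is coordinatewise convergence, and conclude via compactness of $\cM(\cA_X)$ plus the Hausdorff property of the weak topology. The extra details you supply (automatic unitality and complete contractivity of characters, Banach--Alaoglu) are exactly the standard facts the paper leaves implicit.
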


\begin{proof}
Let $\Phi$ denote the map in the statement. 
Since every character of an operator algebra is completely contractive, $\cM(\cA_X)$ is just the space of unital one dimensional completely contractive representations, thus $\Phi$ is a bijection by the discussion in the beginning of  Section \ref{subsec:fin_dim}. 
Moreover, $\Phi$ is continuous by definition
of the weak-$*$ topology and the fact that weak convergence on bounded sets is equivalent
to coordinatewise convergence. Since $\mathcal{M}(\mathcal{A}_X)$ is compact and the weak topology
is Hausdorff, it is a homeomorphism.
\end{proof}

We now prove that the existence of an isomorphism of a certain type between the tensor algebras implies the existence of a vacuum preserving isomorphism of the same type. 
As explained in Section \ref{subsec:reduction}, this will conclude the proof of Theorem \ref{thm:TFAE}.

%%%%%%%%%%%%%
\begin{theorem}
Let $\mathcal{A}_X$ and $\mathcal{A}_Y$ be (isometrically/completely boundedly) isomorphic tensor algebras
of subproduct systems.
Then there exists a vacuum preserving (isometric/completely bounded) isomorphism between $\mathcal{A}_X$ and $\mathcal{A}_Y$. 
\end{theorem}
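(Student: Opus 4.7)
The plan is to reduce to the vacuum preserving case via the ``disc trick'' of Lemma \ref{lem:disc_trick}. By Proposition \ref{prop:TFAE}, it suffices to produce a vacuum preserving isomorphism of the same type as the given $\varphi \colon \cA_X \to \cA_Y$. Let $J_X \triangleleft \fA_{d_X}$ and $J_Y \triangleleft \fA_{d_Y}$ denote the associated closed homogeneous ideals (Proposition \ref{prop:ideal_subps}), where $d_X = \dim X_1$ and $d_Y = \dim Y_1$. By Proposition \ref{prop:character_space}, the adjoint of $\varphi$ induces a homeomorphism $\Phi \colon \ol{Z}(J_Y) \to \ol{Z}(J_X)$, and the vacuum state of $\cA_X$ (respectively $\cA_Y$) corresponds to the origin $0 \in \ol{Z}(J_X)$ (respectively $0 \in \ol{Z}(J_Y)$); hence vacuum preservation for a modified isomorphism amounts to having the induced map on characters send $0$ to $0$.

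The main step is to verify that $\Phi$ restricts to a biholomorphism $Z(J_Y) \to Z(J_X)$ on the open scalar vanishing loci, so that Lemma \ref{lem:disc_trick} applies. First, $Z(J_X)$ and $Z(J_Y)$ are homogeneous subsets of the open unit balls of $X_1$ and $Y_1$: for $z \in Z(J_Y)$ and $\lambda \in \bC$ with $|\lambda|\|z\| < 1$, each homogeneous component $f_n$ of $f \in J_Y$ again lies in $J_Y$ and hence vanishes at $z$, so from the Ces\`aro-convergent decomposition $f = \sum f_n$ and pointwise convergence we deduce $f(\lambda z) = \sum \lambda^n f_n(z) = 0$. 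Second, $\Phi$ is holomorphic in the sense of Section \ref{subsec:disc_trick}: for $w \in \ol{Z}(J_Y)$, the $i$-th coordinate of $\Phi(w)$ is $\varphi(S_i^X)(w)$, and $\varphi(S_i^X) \in \cA_Y$ is a uniformly continuous nc function on $\ol{\fB}_{d_Y}$ that is holomorphic on scalar discs in $\fB_{d_Y}$. Third, $\Phi$ maps $Z(J_Y)$ into $Z(J_X)$: for nonzero $w \in Z(J_Y)$ the holomorphic map $t \mapsto \Phi(t w/\|w\|)$ on $\bD$ takes values in $\ol{B_1(X_1)}$, and by Lemma \ref{lem:Schwarz}(a) it is either constant or lies in the open ball; the former is excluded by the injectivity of $\Phi$. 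The same reasoning applied to $\Phi^{-1}$ yields the desired biholomorphism.

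Lemma \ref{lem:disc_trick} then produces $\lambda, \mu \in \bT$ such that $\Phi \circ U_\lambda \circ \Phi^{-1} \circ U_\mu \circ \Phi$ sends $0$ to $0$. On the algebra side, the gauge transformation $U_\lambda$ on $Z(J_Y)$ is the adjoint of the completely isometric automorphism $\alpha_\lambda^Y \colon \cA_Y \to \cA_Y$ determined by $S_i^Y \mapsto \lambda S_i^Y$, which is well-defined by the universal property of Proposition \ref{prop:universal} and is implemented by the diagonal unitary $\bigoplus_n \lambda^n I_{Y_n}$ on $\cF_Y$; an analogous statement holds for $\alpha_\mu^X$ on $\cA_X$. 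Using the relation $(\psi_1 \circ \psi_2)^* = \psi_2^* \circ \psi_1^*$, the composition on the algebra side is
\[
\tilde{\varphi} := \varphi \circ \alpha_\mu^X \circ \varphi^{-1} \circ \alpha_\lambda^Y \circ \varphi \colon \cA_X \to \cA_Y,
\]
which is a composition of isomorphisms of the same type as $\varphi$ (bounded, completely bounded, isometric, or completely isometric), and whose adjoint sends the vacuum of $\cA_Y$ to the vacuum of $\cA_X$.

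The principal obstacle is the second step above, namely establishing that $\Phi$ restricts to a biholomorphism between the open vanishing loci $Z(J_Y)$ and $Z(J_X)$. In the finite dimensional setting this property was obtained via tools from affine algebraic geometry as in \cite{DRS11}, but those methods are unavailable when $d_X = d_Y = \aleph_0$. The Schwarz-type Lemma \ref{lem:Schwarz} serves precisely as the substitute that makes the disc trick applicable in arbitrary dimension.
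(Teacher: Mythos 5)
Your proposal is correct and follows essentially the same route as the paper: identify the character spaces with the scalar vanishing loci $\ol{Z}(J_Y)$ and $\ol{Z}(J_X)$ via Proposition \ref{prop:character_space}, use the Schwarz-type Lemma \ref{lem:Schwarz}(a) to see that $\varphi^*$ restricts to a biholomorphism of the open homogeneous sets, apply the disc trick of Lemma \ref{lem:disc_trick}, and pull the resulting correction back to the algebras via the gauge automorphisms. The extra details you supply (homogeneity of $Z(J)$ via the Ces\`aro decomposition, non-constancy from injectivity, the diagonal-unitary implementation of the gauge automorphisms) are all consistent with the paper's argument.
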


\begin{proof}
Let $I,J$ be the homogeneous ideals associated with $X,Y$, respectively.
Let $\varphi: \mathcal{A}_X \to \mathcal{A}_Y$ be an isomorphism.
By Proposition \ref{prop:character_space}, the adjoint map $\varphi^* : \chi \mapsto \chi \circ \varphi$ can be regarded
as a homeomorphism $\varphi^*: \ol{Z}(J) \to \ol{Z}(I)$. 
Modulo the identification in Proposition \ref{prop:character_space},
the adjoint is given by
\begin{equation*}
\varphi^*(\xi) = (\varphi(S^X_i)(\xi))_{i \in \Lambda}. 
\end{equation*}
Indeed,  the $i$th component of $\varphi^*(\xi)$ is $S^X_i(\varphi^*(\xi)) = \varphi(S^X_i)(\xi)$, where we have written $S^X_i = S^X(e_i)$. 

%Since $\varphi$ takes values in $\mathcal{A}_Y$, for each $\eta \in H$,
%the map $\xi \mapsto \langle \varphi(S^X)(\xi), \eta \rangle$
%is given by a convergent power series in each disc contained in $Z(J)$ centered
%at the origin (to see this, use Theorem \ref{thm:quotient_compression} to reduce to the case $\cA_X = \fA_d$, and then apply Popescu's noncommutative functional calculus to the strict row contraction $\xi \in Z(J)$). 

From Section \ref{subsec:fin_dim}, we have the continuous restriction map
$\mathcal{A}_Y \to A(\overline{V}(J))$.
At the level of scalars, this shows that each element of $\mathcal{A}_Y$
restricts to a function on $Z(J)$ that is a uniform limit of polynomials, hence holomorphic in each disc contained in $Z(J)$ centered at the origin.
Hence $\varphi^*$ is holomorphic on the homogeneous set $Z(J)$.
In this setting, Lemma \ref{lem:Schwarz} (a) implies that $\varphi^*$ maps $Z(J)$ into $Z(I)$.
By symmetry, we find that $\varphi^*$ is a biholomorphism
from $Z(J)$ onto $Z(I)$.
Applying Lemma \ref{lem:disc_trick} with $A = Z(J), B = Z(I)$ and $F = \varphi^*$,
we obtain $\lambda,\mu \in \mathbb{T}$ so that
\begin{equation}\label{eq:composition}
\varphi^* \circ U_\lambda \circ (\varphi^{-1})^* \circ U_\mu \circ \varphi^*
\end{equation}
maps $0$ to $0$.

Given $\lambda \in \bT$ let $\Gamma_\lambda$ be the gauge automorphism on $\cA_X$ determined uniquely by $\Gamma_\lambda(S^X_i) = \lambda S^X_i$ (the existence of $\Gamma_\lambda$ is guaranteed by Proposition \ref{prop:universal}). 
We use the same notation for the gauge automorphism on $\cA_Y$. 
Note that 
\[
\Gamma_\lambda^* (\xi) (S^X_i) = \lambda S^X_i (\xi) = \lambda \xi_i  = S^X_i (\lambda\xi), 
\]
whence $\Gamma_\lambda^* = U_\lambda$. 
We find that \eqref{eq:composition} is the adjoint of the map
\begin{equation*}
\psi := \varphi \circ \Gamma_\mu \circ \varphi^{-1} \circ \Gamma_\lambda \circ \varphi: \mathcal{A}_X \to \mathcal{A}_Y ,
\end{equation*}
which is therefore a vacuum preserving isomorphism.
Finally, $\psi$ is isometric if $\varphi$ is, and $\psi$ is a complete isomorphism (completely bounded isomorphism with a completely bounded inverse) if $\varphi$ is.
\end{proof}

%%%%%%%%%%%%%%%%%%%%%%%%%%%%%%%%%%%%%%%%%%%%%%%%%%%%%%%%%%%%%%%%%%%%%%%%%%%

\bibliographystyle{amsplain}

\end{document}